\newcommand{\keywords}[1]{\textbf{Keywords:} #1}
\newtheorem{theorem}{Theorem}[section]
\newtheorem{corollary}{Corollary}[theorem]
\newtheorem{lemma}[theorem]{Lemma}
\theoremstyle{definition}
\newtheorem{definition}{Definition}[section]
\theoremstyle{remark}
\title{\textbf{On the spectrum of closed neighbourhood corona product of graph and its application}}
\author[1]{Bishal Sonar\thanks{Email: bsonarnits@gmail.com; Orcid link: 0009-0007-6145-6481}}
\author[2]{Ravi Srivastava\thanks{Corresponding author Email: ravi@nitsikkim.ac.in; Orcid link: 0009-0001-9615-6709}}
\affil[1,2]{Department of Mathematics, National Institute of Technology Sikkim, South Sikkim 737139, India}
\date{}
\begin{document}
\parskip1ex
\parindent0pt
\maketitle

\begin{abstract}
    \noindent In this paper, we investigate the spectral properties of the closed neighborhood corona product of graphs, which was introduced by Harishchandra S. Ramane et al.~\cite{ramane2021polynomials} (cf. Polynomials Associated with Closed Neighborhood Corona and Neighborhood Complement Corona of Graphs). Based on their results, such as characteristic polynomials of the adjacency, Laplacian, and signless Laplacian matrices, we further investigate the spectral characteristics of this product graph. Specifically, we investigate conditions under which cospectrality occurs for this operation. Further, we determine the Kirchhoff index and count spanning trees and identify sequences of non-cospectral equienergetic product graphs. Finally, we develop criteria for when the product graph is integral and thereby contribute to a deeper understanding of the algebraic and combinatorial structure of the product graph.
\end{abstract}

\textbf{MSC2020 Classification:} 05C22, 05C50, 05C76\\
\keywords{Closed neighbourhood corona, Integral graph, Kirchhoff Index, Spanning trees, equienergetic graphs.}

\newpage

\section{Introduction}

    In graph theory, a graph $\mathcal{G}$ is denoted as an ordered pair $(\mathcal{V}, \mathcal{E})$, where $\mathcal{V} = \{v_i : i = 1, 2, \ldots, n\}$ represents the set of vertices, and $\mathcal{E} = \{e_i : i = 1, 2, \ldots, m\}$ represents the set of edges. The vertex degree $d(v_i)$ is the number of edges connected to the vertex $v_i$. Hence, the adjacency matrix $\mathcal{A}_{\mathcal{G}} = (a_{ij})_{n\times n}$ is a critical representation of the graph $\mathcal{G}$, where each entry $a_{ij} = 1$ if there is an edge between vertex $v_i$ and vertex $v_j$ i.e., $v_i$ and $v_j$ are adjacent, and $a_{ij} = 0$ otherwise. Additionally, two crucial matrices associated with a graph $\mathcal{G}$: the Laplacian matrix $\mathcal{L}_{\mathcal{G}} = \mathcal{D}_{\mathcal{G}} - \mathcal{A}_{\mathcal{G}}$ and the signless Laplacian matrix $\mathcal{Q}_{\mathcal{G}} = \mathcal{D}_{\mathcal{G}} + \mathcal{A}_{\mathcal{G}}$. The diagonal matrix $\mathcal{D}_{\mathcal{G}}$ has the $i^{th}$ diagonal entry as $d(v_i)$, representing the degree of vertex $v_i$. The shortest distance between any two vertices $v_i$ and $v_j$ of the graph $\mathcal{G}$ is denoted as $d(v_i,v_j)$, and  for any $v_i\in\mathcal{V}$, $N(v_i)=\{v_j\in\mathcal{V}: v_i \text{ and } v_j \text{ are adjacent}\}$. The energy of a graph $\mathcal{G}$ of order $n$ is the sum of the absolute eigenvalues of the adjacency matrix, given by $$\mathcal{E}_{\mathcal{G}}=\sum_{i=1}^n|\lambda_i|,$$ where $\lambda_i's$ are the eigenvalues of $\mathcal{A}_{\mathcal{G}}$. A graph $\mathcal{G}$ is classified as Integral if all the eigenvalues of its adjacency matrix are integers. This discussion focuses exclusively on simple, undirected, and finite graphs.

    Frucht and F Harary\cite{frucht1970corona} first defined the corona product for graphs. Later, Cam McLeman and Erin McNicholas~\cite{mcleman2011spectra} introduced the coronal of graphs to obtain the adjacency spectra for arbitrary graphs. Shu and Gui~\cite{cui2012spectrum} later generalized this concept and defined the coronal of the Laplacian and the signless Laplacian matrix of graphs. Since then, various researchers have explored the variants of corona product for both signed and unsigned graphs~\cite{brondani2023aalpha, gopalapillai2011spectrum, guragain2024spectrum, joseph2023graph, singh2023structural, sonar2023spectrum, sonar2024signed}. 
    We  establish a condition for the closed neighborhood corona product graph to be cospectral. Additionally, we apply our findings to obtain the number of spanning trees, the Kirchhoff Index, the sequence of non-cospectral equienergetic product graphs and the conditions required for the product graph to be Integral.\\
    In the subsequent Section \ref{S2}, we present an array of preliminary results. This is followed by an in-depth exploration of the definition and principal findings in Section \ref{S3}. Further, Section \ref{S4} is dedicated to examining the multifaceted applications of the graph product, providing a comprehensive overview of its utility in academic research.

\section{Preliminaries}\label{S2}    
    \begin{lemma}\label{L2}~\cite{bapat2010graphs}
        Let $\mathcal{A}_1,\mathcal{A}_2,\mathcal{A}_3,$ and $\mathcal{A}_4$ be matrix of order $a_1\times a_1,~ a_1\times a_2, ~a_2\times a_1, ~a_2\times a_2$ respectively. If $\mathcal{A}_1$ and $\mathcal{A}_4$ are invertible, then 
            \begin{align*}
                \det\begin{bmatrix}
                \mathcal{A}_1&\mathcal{A}_2\\\mathcal{A}_3&\mathcal{A}_4
        \end{bmatrix}&=\det(\mathcal{A}_1)\det(\mathcal{A}_4-\mathcal{A}_3\mathcal{A}_1^{-1}\mathcal{A}_2)\\
        &=\det(\mathcal{A}_4)\det(\mathcal{A}_1-\mathcal{A}_2\mathcal{A}_4^{-1}\mathcal{A}_3).
        \end{align*}
    \end{lemma}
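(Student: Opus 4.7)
The plan is to prove both equalities by a block LU-type factorization, reducing the determinant of the $2\times 2$ block matrix to the product of determinants of its diagonal blocks after a Schur-complement step. Since this is a standard textbook identity quoted from~\cite{bapat2010graphs}, the only actual work is to exhibit the factorization and verify the resulting block product.

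For the first equality, I would exploit the invertibility of $\mathcal{A}_1$ to write
\[
\begin{bmatrix}\mathcal{A}_1 & \mathcal{A}_2\\ \mathcal{A}_3 & \mathcal{A}_4\end{bmatrix}
= \begin{bmatrix} I_{a_1} & 0 \\ \mathcal{A}_3\mathcal{A}_1^{-1} & I_{a_2}\end{bmatrix}
\begin{bmatrix} \mathcal{A}_1 & \mathcal{A}_2 \\ 0 & \mathcal{A}_4-\mathcal{A}_3\mathcal{A}_1^{-1}\mathcal{A}_2 \end{bmatrix}.
\]
The first factor is block lower triangular with identity diagonal blocks, so its determinant is $1$; the second factor is block upper triangular, so its determinant is $\det(\mathcal{A}_1)\det(\mathcal{A}_4-\mathcal{A}_3\mathcal{A}_1^{-1}\mathcal{A}_2)$. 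Multiplicativity of the determinant then yields the first identity.

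For the second equality I would use the dual factorization, this time using invertibility of $\mathcal{A}_4$:
\[
\begin{bmatrix}\mathcal{A}_1 & \mathcal{A}_2\\ \mathcal{A}_3 & \mathcal{A}_4\end{bmatrix}
= \begin{bmatrix} I_{a_1} & \mathcal{A}_2\mathcal{A}_4^{-1} \\ 0 & I_{a_2}\end{bmatrix}
\begin{bmatrix} \mathcal{A}_1-\mathcal{A}_2\mathcal{A}_4^{-1}\mathcal{A}_3 & 0 \\ \mathcal{A}_3 & \mathcal{A}_4 \end{bmatrix},
\]
and an identical determinant computation gives the second identity. There is no genuine obstacle here: the only subtle step is checking that the block product expands correctly, which is a mechanical verification using $\mathcal{A}_1\mathcal{A}_1^{-1}=I_{a_1}$ (respectively $\mathcal{A}_4^{-1}\mathcal{A}_4=I_{a_2}$) to cancel the off-diagonal cross terms. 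No additional machinery beyond multiplicativity of $\det$ and the definition of a block triangular determinant is required.
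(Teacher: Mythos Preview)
Your proof is correct and is the standard Schur-complement/block-LU argument for this identity. Note that the paper does not actually prove this lemma: it is stated as a preliminary result cited from~\cite{bapat2010graphs} with no accompanying proof, so there is nothing in the paper to compare your argument against. Your factorizations are exactly what one finds in the cited reference, and the verification you describe is complete.
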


        Consider two matrices $\mathcal{A}=(a_{ij})$ with dimensions $m_1 \times n_1$ and $\mathcal{B}=(b_{ij})$ with dimensions $m_2 \times n_2$. The operation known as the Kronecker product, symbolized by $\mathcal{A}\otimes \mathcal{B}$, yields a new matrix with dimensions $m_1m_2 \times n_1n_2$. This procedure involves taking each element $a_{ij}$ from matrix $\mathcal{A}$ and multiplying it by the entire matrix $\mathcal{B}$, effectively resizing $\mathcal{B}$ by the factor of $a_{ij}$. As highlighted by Neumaier (1992)~\cite{neumaier1992horn}, this technique allows for the assembling two smaller matrices.

        \textbf{Properties:}
            \begin{itemize}
                \item The operation is associative.
                \item Transposing the Kronecker product results in $(\mathcal{A}\otimes \mathcal{B})^T=\mathcal{A}^T\otimes \mathcal{B}^T$.
                \item The product of two Kronecker products can be expressed as $(\mathcal{A}\otimes \mathcal{B})(\mathcal{C} \otimes \mathcal{D})=\mathcal{AC}\otimes \mathcal{BD}$, assuming the products $\mathcal{AC}$ and $\mathcal{BD}$ are defined.
                \item The inverse of a Kronecker product is given by $(\mathcal{A}\otimes \mathcal{B})^{-1}=\mathcal{A}^{-1}\otimes \mathcal{B}^{-1}$, provided that $\mathcal{A}$ and $\mathcal{B}$ are invertible matrices.
                \item If $\mathcal{A}$ and $\mathcal{B}$ are square matrices of sizes $m \times m$ and $n \times n$ respectively, then the determinant of their Kronecker product is $\det(\mathcal{A} \otimes \mathcal{B})=(\det \mathcal{A})^n(\det \mathcal{B})^m.$
            \end{itemize}

        \subsection{Definition\texorpdfstring{~\cite{mcleman2011spectra}}{}}\label{Def2}
            Let $\mathcal{G}$ be a graph with $n$ vertices. The coronal of $\mathcal{G}$, denoted by $\chi_{\mathcal{G}}(x)$, is defined as the sum of all the entries in the matrix $(xI_n - \mathcal{A}_{\mathcal{G}})^{-1}$, where $\mathcal{A}_{\mathcal{G}}$ is the adjacency matrix of the graph $\mathcal{G}$, and $I_n$ is the identity matrix of order $n$.
            \begin{equation}
                \chi_{\mathcal{G}}(x)=\textbf{1}_n^T(xI_n-\mathcal{A}_{\mathcal{G}})^{-1}\textbf{1}_n,
            \end{equation}
            where $\textbf{1}_n$ is all one column matrix.\\
            The coronal of Laplacian and signless Laplacian matrix was introduced by Gui \cite{cui2012spectrum} and is given by $\chi_{\mathcal{L}_{\mathcal{G}}}=1_n(xI_n-\mathcal{L}_{\mathcal{G}})^{-1}1_n$ and $\chi_{\mathcal{Q}_{\mathcal{G}}}=1_n(xI_n-\mathcal{Q}_{\mathcal{G}})^{-1}1_n$ respectively.
        \subsection{Notation}
            We use the following notation for a graph $\mathcal{G}$ of order n.\\
            Adjacency Spectrum $\sigma(\mathcal{A})=\{\lambda_1,\lambda_2,\ldots,\lambda_n\}$\\
            Laplacian Spectrum  $\sigma(\mathcal{L})=\{\gamma_1,\gamma_2,\ldots,\gamma_n\}$\\
            Signless Laplacian Spectrum $\sigma(\mathcal{Q})=\{\nu_1,\nu_2,\ldots,\nu_n\}.$
            
\section{Spectral properties}\label{S3}
        
    \begin{definition}(Closed neighbourhood corona)~\label{Def3}
        Consider two graphs $\mathcal{G}_1$ and $\mathcal{G}_2$ with $n_1$ and $n_2$ vertices and $e_1$ and $e_2$ edges respectively. The closed neighbourhood corona of $\mathcal{G}_1$ and $\mathcal{G}_2$, denoted by $\mathcal{G}_1\boxtimes\mathcal{G}_2$, is a new graph obtained by creating $n_1$ copies of $\mathcal{G}_2$. Each vertex of the $i^{th}$ copy of $\mathcal{G}_2$ is then connected to the $i^{th}$ vertex and neighbourhood of the $i^{th}$ vertex ($u_i$) of $\mathcal{G}_1$.
    \end{definition}
    The number of vertices in $\mathcal{G}_1\boxtimes\mathcal{G}_2$ is $n_1(1+n_2)$ where as the number of edges in $\mathcal{G}_1\boxtimes\mathcal{G}_2$ is $e_1+n_1e_2+2e_1n_2+n_1n_2$.
    
    \begin{figure}
        \centering
        \begin{tikzpicture}
\node (u1) at (-11, 0) [circle, fill=black, inner sep=1.5pt, label=above:$u_1$] {};
\node (u2) at (-10, 0) [circle, fill=black, inner sep=1.5pt, label=above:$u_2$] {};
\node (u3) at (-10, -1) [circle, fill=black, inner sep=1.5pt, label=below:$u_3$] {};
\node (u4) at (-11, -1) [circle, fill=black, inner sep=1.5pt, label=below:$u_4$] {};
\node at (-10.5, -1.7) {$G_1$};

\draw (u1) -- (u2);
\draw (u3) -- (u2);
\draw (u4) -- (u3);
\draw (u1) -- (u4);

\node (v1) at (-10.5, -3) [circle, fill=black, inner sep=1.5pt, label=above:$v_1$] {};
\node (v2) at (-11, -4) [circle, fill=black, inner sep=1.5pt, label=below:$v_2$] {};
\node (v3) at (-10, -4) [circle, fill=black, inner sep=1.5pt, label=below:$v_3$] {};
\node at (-10.5, -4.7) {$G_2$};

\draw (v1) -- (v2);
\draw (v3) -- (v2);
\draw (v1) -- (v3);


\node (v21) at (-8, 1) [circle, fill=black, inner sep=1.5pt, label=above:$v_2^1$] {};
\node (v32) at (-3, 1) [circle, fill=black, inner sep=1.5pt, label=above:$v_3^2$] {};
\node (v31) at (-9, 0.5) [circle, fill=black, inner sep=1.5pt, label=above:$v_3^1$] {};
\node (v22) at (-2, 0.5) [circle, fill=black, inner sep=1.5pt, label=above:$v_2^2$] {};
\node (v11) at (-8, 0) [circle, fill=black, inner sep=1.5pt, label=below:$v_1^1$] {};
\node (v12) at (-3, 0) [circle, fill=black, inner sep=1.5pt, label=below:$v_1^2$] {};
\node (u1) at (-6.5, -.5) [circle, fill=black, inner sep=1.5pt, label={330:$u_1$}] {};
\node (u2) at (-4.5, -.5) [circle, fill=black, inner sep=1.5pt, label={240:$u_2$}] {};
\node (u3) at (-4.5, -2.5) [circle, fill=black, inner sep=1.5pt, label={120:$u_3$}] {};
\node (u4) at (-6.5, -2.5) [circle, fill=black, inner sep=1.5pt, label={60:$u_4$}] {};
\node (v34) at (-8, -4) [circle, fill=black, inner sep=1.5pt, label=below:$v_3^4$] {};
\node (v23) at (-3, -4) [circle, fill=black, inner sep=1.5pt, label=below:$v_2^3$] {};
\node (v24) at (-9, -3.5) [circle, fill=black, inner sep=1.5pt, label=below:$v_2^4$] {};
\node (v33) at (-2, -3.5) [circle, fill=black, inner sep=1.5pt, label=below:$v_3^3$] {};
\node (v14) at (-8, -3) [circle, fill=black, inner sep=1.5pt, label=above:$v_1^4$] {};
\node (v13) at (-3, -3) [circle, fill=black, inner sep=1.5pt, label=above:$v_1^3$] {};
\node at (-5.5, -4.7) {$G_1\boxtimes G_2$};

\draw (u1) -- (u2);
\draw (u3) -- (u2);
\draw (u4) -- (u3);
\draw (u1) -- (u4);
\draw (v31) -- (v21);
\draw (v11) -- (v21);
\draw (v11) -- (v31);
\draw (v32) -- (v22);
\draw (v12) -- (v22);
\draw (v12) -- (v32);
\draw (v33) -- (v23);
\draw (v13) -- (v23);
\draw (v13) -- (v33);
\draw (v34) -- (v24);
\draw (v14) -- (v24);
\draw (v14) -- (v34);
\draw (v31) to[out=270,in=220] (u1);
\draw (v21) -- (u1);
\draw (v11) -- (u1);
\draw (v32) -- (u2);
\draw (v22) to[out=260,in=320] (u2);
\draw (v12) -- (u2);
\draw (v33) to[out=90,in=60] (u3);
\draw (v23) -- (u3);
\draw (v13) -- (u3);
\draw (v34) -- (u4);
\draw (v24) to[out=90,in=120] (u4);
\draw (v14) -- (u4);
\draw (v31) -- (u2);
\draw (v21) -- (u2);
\draw (v11) -- (u2);
\draw (v31) -- (u4);
\draw (v21) -- (u4);
\draw (v11) -- (u4);
\draw (v32) -- (u1);
\draw (v22) -- (u1);
\draw (v12) -- (u1);
\draw (v32) -- (u3);
\draw (v22) -- (u3);
\draw (v12) -- (u3);
\draw (v33) -- (u2);
\draw (v23) -- (u2);
\draw (v13) -- (u2);
\draw (v33) -- (u4);
\draw (v23) -- (u4);
\draw (v13) -- (u4);
\draw (v34) -- (u1);
\draw (v24) -- (u1);
\draw (v14) -- (u1);
\draw (v34) -- (u3);
\draw (v24) -- (u3);
\draw (v14) -- (u3);

\end{tikzpicture}
        \caption{Closed neighbourhood corona product of $C_4$ and $C_3$.}
        \label{fig:1}
    \end{figure}
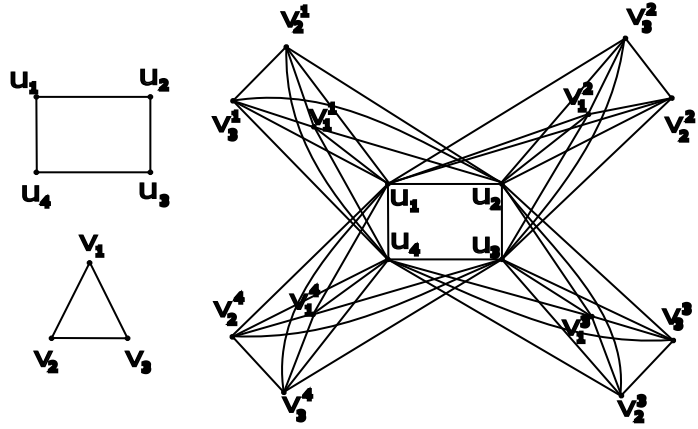
    Let $\mathcal{G}_1$ and $\mathcal{G}_1$ be arbitrary graphs on $n_1$ and $n_2$ vertices, respectively. Following the labelling given by I. Gopalapillai \cite{gopalapillai2011spectrum}, we will label the vertices of $\mathcal{G}_1\boxtimes\mathcal{G}_2$ as follows. Let $V(\mathcal{G}_1)=\{u_i:i=1,2,\ldots,n_1\},$ Let $\mathcal{G}_1$ and $\mathcal{G}_2$ be arbitrary graphs with $n_1$ and $n_2$ vertices, respectively. Following the vertex labelling approach introduced by I. Gopalapillai \cite{gopalapillai2011spectrum}, we define the labelling of the vertices of the closed neighbourhood corona product $\mathcal{G}_1\boxtimes\mathcal{G}_2$. Denote the vertex sets of $\mathcal{G}_1$ and $\mathcal{G}_2$ as $V(\mathcal{G}_1)=\{u_i : i=1,2,\ldots,n_1\}$ and $V(\mathcal{G}_2)=\{v_i : i=1,2,\ldots,n_2\}$, respectively. For each vertex $u_i$ in $\mathcal{G}_1$, we associate a copy of $\mathcal{G}_2$, denoted as the set $\{v_1^i, v_2^i, \ldots, v_{n_2}^i\}$, where $v_j^i$ represents the vertex in the $i^{th}$ copy of $\mathcal{G}_2$ corresponding to the vertex $v_j$ in $\mathcal{G}_2$. The labelling scheme facilitates the analysis of the product graph $\mathcal{G}_1\boxtimes\mathcal{G}_2$ by systematically identifying the vertices across the two graph components. Denote
    \begin{equation} \label{eqn2}
                V_i=\big\{v_i^1,v_i^2,\ldots,v_i^{n_1}\big\}, ~~~i=1,2,\ldots,n_2.
    \end{equation}
    Then $V(\mathcal{G}_1)\cup V_1\cup V_2\cup\ldots\cup V_{n_2}$ is a partition of $V(\mathcal{G}_1\boxtimes\mathcal{G}_2)$.\\
    
\subsection{Adjacency spectrum}
    \begin{theorem}\label{Th1}
        Let $\mathcal{G}_1$ and $\mathcal{G}_2$ be two graphs of orders $n_1$ and $n_2$, respectively, each with their associated eigenvalues denoted by $\lambda_{i1},\lambda_{i2},\ldots,\lambda_{in_i}$ for $i=1,2$. The adjacency characteristic polynomial of the graph $\mathcal{G}_1\boxtimes\mathcal{G}_2$ can be formulated as follows: \\ $f(\mathcal{A}_{\mathcal{G}_1\boxtimes\mathcal{G}_2},x)=\prod_{i=1}^{n_2}(x-\lambda_{2i})^{n_1}\cdot\prod_{i=1}^{n_1}\big[x-\lambda_{1i}-\chi_{A_{\mathcal{G}_2}}(x)(1+\lambda_{1i}^2)\big].$  
    \end{theorem}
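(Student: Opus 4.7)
The natural plan is to write $\mathcal{A}_{\mathcal{G}_1\boxtimes\mathcal{G}_2}$ as a $2\times2$ block matrix induced by the partition of the vertex set into $V(\mathcal{G}_1)$ and the $n_1$ disjoint copies of $\mathcal{G}_2$, and then apply the Schur-complement identity of Lemma \ref{L2}. Ordering the vertices so that $u_1,\dots,u_{n_1}$ appear first and the copies of $\mathcal{G}_2$ follow copy by copy, the lower-right block is simply $I_{n_1}\otimes\mathcal{A}_{\mathcal{G}_2}$ because distinct copies share no edges. The key structural observation is that every vertex of the $i$-th copy is joined to exactly the closed neighbourhood $N[u_i]$, so the columns of the coupling block corresponding to the $i$-th copy are identical to the $i$-th column of $I_{n_1}+\mathcal{A}_{\mathcal{G}_1}$; hence the coupling block factors as the Kronecker product $(I_{n_1}+\mathcal{A}_{\mathcal{G}_1})\otimes\mathbf{1}_{n_2}^T$.

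Pivoting on the generically invertible copies block, Lemma \ref{L2} yields the pivot determinant $\det(xI_{n_2}-\mathcal{A}_{\mathcal{G}_2})^{n_1}=\prod_{i=1}^{n_2}(x-\lambda_{2i})^{n_1}$ via the Kronecker determinant formula, which is the first factor in the theorem. The Schur complement equals
\begin{equation*}
S=xI_{n_1}-\mathcal{A}_{\mathcal{G}_1}-\bigl[(I+\mathcal{A}_{\mathcal{G}_1})\otimes\mathbf{1}_{n_2}^T\bigr]\bigl[I_{n_1}\otimes(xI-\mathcal{A}_{\mathcal{G}_2})^{-1}\bigr]\bigl[(I+\mathcal{A}_{\mathcal{G}_1})\otimes\mathbf{1}_{n_2}\bigr],
\end{equation*}
and a single application of the mixed-product rule $(A\otimes B)(C\otimes D)=AC\otimes BD$ collapses the triple product to $(I+\mathcal{A}_{\mathcal{G}_1})^2\otimes\bigl[\mathbf{1}_{n_2}^T(xI-\mathcal{A}_{\mathcal{G}_2})^{-1}\mathbf{1}_{n_2}\bigr]$. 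The scalar Kronecker factor is precisely the coronal $\chi_{\mathcal{A}_{\mathcal{G}_2}}(x)$ of Definition \ref{Def2}, so $S=xI_{n_1}-\mathcal{A}_{\mathcal{G}_1}-\chi_{\mathcal{A}_{\mathcal{G}_2}}(x)(I+\mathcal{A}_{\mathcal{G}_1})^2$. An orthogonal diagonalisation $\mathcal{A}_{\mathcal{G}_1}=P\Lambda P^T$ simultaneously diagonalises $S$ and gives $\det S=\prod_{i=1}^{n_1}\bigl[x-\lambda_{1i}-\chi_{\mathcal{A}_{\mathcal{G}_2}}(x)(1+\lambda_{1i})^2\bigr]$, which multiplied by the pivot determinant produces the stated characteristic polynomial.

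The main subtlety is recognising the coupling block as the clean Kronecker product $(I+\mathcal{A}_{\mathcal{G}_1})\otimes\mathbf{1}_{n_2}^T$ rather than as the concatenation of $n_2$ copies of $I+\mathcal{A}_{\mathcal{G}_1}$ suggested by the $V_1,\dots,V_{n_2}$ partition of \eqref{eqn2}; both descriptions are permutation equivalent, but only the copy-by-copy ordering makes the Kronecker structure of both diagonal and off-diagonal blocks transparent and lets the mixed-product rule do all the work. One final detail is that Lemma \ref{L2} is applied under invertibility of the pivot, but the resulting identity extends to all $x$ by polynomial continuity, since after clearing the poles of $\chi_{\mathcal{A}_{\mathcal{G}_2}}(x)$ both sides are polynomials in $x$.
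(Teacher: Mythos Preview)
Your proof is correct and follows the same Schur-complement strategy as the paper. The only difference is your copy-by-copy vertex ordering versus the paper's $V_1,\dots,V_{n_2}$ ordering of \eqref{eqn2}, which merely swaps the Kronecker factors (the paper has lower-right block $\mathcal{A}_{\mathcal{G}_2}\otimes I_{n_1}$ and coupling block $\mathbf{1}_{n_2}^T\otimes(I_{n_1}+\mathcal{A}_{\mathcal{G}_1})$); contrary to your closing remark, that ordering also gives a clean Kronecker factorisation, so both routes are equally direct.
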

    \begin{proof}
        From the partition generated by equation (\ref{eqn2}), the adjacency matrix of $\mathcal{G}_1\boxtimes\mathcal{G}_2$ is,\\
        $$\mathcal{A}_{\mathcal{G}_1\boxtimes\mathcal{G}_2}=\begin{bmatrix}
            \mathcal{A}_{\mathcal{G}_1}&\textbf{1}^T_{n_2}\otimes (I_{n_1}+\mathcal{A}_{\mathcal{G}_1}))\\
            \textbf{1}_{n_2}\otimes (I_{n_1}+\mathcal{A}_{\mathcal{G}_1}))&A_{\mathcal{G}_2}\otimes I_{n_1}
        \end{bmatrix}$$
        The characteristics polynomial of the adjacency matrix is,
        \begin{equation}
            \begin{split}
                f(\mathcal{A}_{\mathcal{G}_1\boxtimes\mathcal{G}_2},x)&=\det(xI_{n_1(1+n_2)}-\mathcal{A}_{\mathcal{G}_1\boxtimes\mathcal{G}_2})\\
                &=\det\begin{bmatrix}
                       xI_{n_1}-\mathcal{A}_{\mathcal{G}_1}&-\textbf{1}^T_{n_2}\otimes (I_{n_1}+\mathcal{A}_{\mathcal{G}_1})\\
                        -\textbf{1}_{n_2}\otimes (I_{n_1}+\mathcal{A}_{\mathcal{G}_1}))&(xI_{n_2}-A_{\mathcal{G}_2})\otimes I_{n_1}  
                \end{bmatrix}\\
                &=\det[\{xI_{n_2}-A_{\mathcal{G}_2}\}\otimes I_{n_1}]\cdot \det[B_A]\\
                &=\det[xI_{n_2}-A_{\mathcal{G}_2}]^{n_1}\cdot\det[I_{n_1}]^{n_2}\cdot \det[B_A]\\
                &=\prod_{i=1}^{n_2}(x-\lambda_{2i})^{n_1}\cdot\det[B_A]
            \end{split}
        \end{equation}
        where \begin{equation*}
            \begin{split}
                \det(B_A)&=\det\Big[xI_{n_1}-\mathcal{A}_{\mathcal{G}_1}-\textbf{1}^T_{n_2}(xI_{n_2}-A_{\mathcal{G}_2})^{-1}\textbf{1}_{n_2}\otimes(I_{n_1}+\mathcal{A}_{\mathcal{G}_1})^2\Big]\\
                &=\det\Big[xI_{n_1}-\mathcal{A}_{\mathcal{G}_1}-\chi_{A_{\mathcal{G}_2}}(x)(I_{n_1}+\mathcal{A}_{\mathcal{G}_1})^2\Big]\\
                &=\prod_{i=1}^{n_1}\Big[x-\lambda_{1i}-\chi_{A_{\mathcal{G}_2}}(x)(1+\lambda_{1i})^2 \Big].
            \end{split}
        \end{equation*}
        Therefore, $f(\mathcal{A}_{\mathcal{G}_1\boxtimes\mathcal{G}_2},x)=\prod_{i=1}^{n_2}(x-\lambda_{2i})^{n_1}\cdot\prod_{i=1}^{n_1}\Big[x-\lambda_{1i}-\chi_{A_{\mathcal{G}_2}}(x)(1+\lambda_{1i})^2 \Big].$
    \end{proof}

\textbf{Note:} The above result has already been proved by Harishchandra S. Ramane et al.~\cite{ramane2021polynomials}, we are adding this theorem for the sake of proper understanding of the further application.

    \begin{corollary}\label{C1}
        Let $\mathcal{G}_1$ and $\mathcal{G}_2$ be two cospectral graphs of order $n$ having adjacency eigenvalues \\ $\{\lambda_{i1},\lambda_{i2},\ldots,\lambda_{in}\},$ $i=1,2$ and $\mathcal{G}$ be any arbitrary graph of order $m$ having adjacency eigenvalues\\ $\{\lambda_1,\lambda_2,\ldots,\lambda_m\}$, then 
            \begin{enumerate}
                \item $\mathcal{G}_1\boxtimes \mathcal{G}$ and $\mathcal{G}_2\boxtimes \mathcal{G}$ are adjacency cospectral.
                \item $\mathcal{G}\boxtimes \mathcal{G}_1$ and $\mathcal{G}\boxtimes \mathcal{G}_2$ are adjacency cospectral if $\chi_{\mathcal{A}_{\mathcal{G}_1}}(x)=\chi_{A_{\mathcal{G}_2}}(x).$
            \end{enumerate}
    \end{corollary}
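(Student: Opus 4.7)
The plan is to prove both parts by direct substitution into the formula of Theorem \ref{Th1}, since the characteristic polynomial of the closed neighborhood corona factors cleanly into a part depending on one graph and a part depending on the other. The hypothesis of cospectrality of $\mathcal{G}_1$ and $\mathcal{G}_2$ (with eigenvalues $\lambda_{1i} = \lambda_{2i}$ after a suitable reordering for $i=1,\ldots,n$) will drive part (i), and the additional coronal equality $\chi_{\mathcal{A}_{\mathcal{G}_1}}(x)=\chi_{\mathcal{A}_{\mathcal{G}_2}}(x)$ will be the extra ingredient needed for part (ii).

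For part (i), I would apply Theorem \ref{Th1} to each of $\mathcal{G}_1\boxtimes \mathcal{G}$ and $\mathcal{G}_2\boxtimes \mathcal{G}$, obtaining
\[
 f(\mathcal{A}_{\mathcal{G}_k\boxtimes \mathcal{G}},x)=\prod_{j=1}^{m}(x-\lambda_{j})^{n}\cdot\prod_{i=1}^{n}\!\bigl[x-\lambda_{ki}-\chi_{\mathcal{A}_{\mathcal{G}}}(x)(1+\lambda_{ki})^{2}\bigr]
\]
for $k=1,2$. The first product is identical for $k=1,2$, as it involves only $\mathcal{G}$. The second product depends on $\mathcal{G}_k$ only through its adjacency spectrum, so the assumed cospectrality of $\mathcal{G}_1$ and $\mathcal{G}_2$ immediately gives equality of the two polynomials.

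For part (ii), I would write out
\[
 f(\mathcal{A}_{\mathcal{G}\boxtimes \mathcal{G}_k},x)=\prod_{i=1}^{n}(x-\lambda_{ki})^{m}\cdot\prod_{j=1}^{m}\!\bigl[x-\lambda_{j}-\chi_{\mathcal{A}_{\mathcal{G}_k}}(x)(1+\lambda_{j})^{2}\bigr]
\]
for $k=1,2$. The first product is equal for $k=1,2$ by cospectrality of $\mathcal{G}_1$ and $\mathcal{G}_2$, and the hypothesis $\chi_{\mathcal{A}_{\mathcal{G}_1}}(x)=\chi_{\mathcal{A}_{\mathcal{G}_2}}(x)$ matches the second product term-by-term. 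Equality of the characteristic polynomials follows.

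There is essentially no hard step: the content of the corollary is that Theorem \ref{Th1} exhibits a clean separation between the roles of the two graphs. The only subtlety to be careful about is that cospectrality of $\mathcal{G}_1$ and $\mathcal{G}_2$ alone does not force their coronals to agree (the coronal depends on $(xI-\mathcal{A})^{-1}$ paired with the all-ones vector, not just on the spectrum), which is precisely why part (ii) requires the additional coronal hypothesis while part (i) does not.
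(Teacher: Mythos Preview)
Your proposal is correct and follows essentially the same approach as the paper: both proofs simply substitute into the formula of Theorem~\ref{Th1} and observe that cospectrality handles part~(i), while cospectrality together with the coronal equality handles part~(ii). Your added remark about why the coronal hypothesis is needed only in~(ii) is a nice clarification that the paper does not make explicit.
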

    \begin{proof}
        It is easily seen that,
        \begin{equation*}
            \begin{split}
                f(\mathcal{A}_{\mathcal{G}_1\boxtimes\mathcal{G}},x)&=\prod_{i=1}^{m}(x-\lambda_{i})^{n}\cdot\prod_{i=1}^{n}\Big[x-\lambda_{1i}-\chi_{A_{\mathcal{G}}}(x)(1+\lambda_{1i})^2 \Big]\\
                &=\prod_{i=1}^{m}(x-\lambda_{i})^{n}\cdot\prod_{i=1}^{n}\Big[x-\lambda_{2i}-\chi_{A_{\mathcal{G}}}(x)(1+\lambda_{2i})^2 \Big]\\
                &=f(\mathcal{A}_{\mathcal{G}_2\boxtimes\mathcal{G}},x).
            \end{split}
        \end{equation*}
        Similarly,
        \begin{equation*}
            \begin{split}
                f(\mathcal{A}_{\mathcal{G}\boxtimes\mathcal{G}_1},x)&=\prod_{i=1}^{n}(x-\lambda_{1i})^{m}\cdot\prod_{i=1}^{m}\Big[x-\lambda_{i}-\chi_{A_{\mathcal{G}_1}}(x)(1+\lambda_{i})^2\Big]\\
                &=\prod_{i=1}^{n}(x-\lambda_{2i})^{m}\cdot\prod_{i=1}^{m}\Big[x-\lambda_{i}-\chi_{A_{\mathcal{G}_2}}(x)(1+\lambda_{i})^2 \Big]\\
                &=f(\mathcal{A}_{\mathcal{G}\boxtimes\mathcal{G}_2},x).
            \end{split}
        \end{equation*}
    \end{proof}

\subsection{Laplacian spectrum}
    \begin{theorem}\label{Th2}
        Let $\mathcal{G}_1$ be a $r_1$-regular graph of order $n_1$, and $\mathcal{G}_2$ be an arbitrary graph of order $n_2$ with their associated Laplacian eigenvalues $\gamma_{i1},\gamma_{i2},\ldots,\gamma_{in_i}$, $i=1,2.$ Then the Laplacian characteristics polynomial of $\mathcal{G}_1\boxtimes\mathcal{G}_2$ is\\ $f(\mathcal{L}_{\mathcal{G}_1\boxtimes\mathcal{G}_2},x)=\prod_{i=1}^{n_2}(x-(r_1+1)-\gamma_{2i})^{n_1}\cdot\prod_{i=1}^{n_1}\Big[x-n_2(r_1+1)-\gamma_{1i}-\chi_{\mathcal{L}_{\mathcal{G}_2}}(x-(r_1+1))(r_1+1-\gamma_{1i})^2\Big].$ 
    \end{theorem}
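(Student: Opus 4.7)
My plan is to adapt the Schur-complement strategy from Theorem~\ref{Th1} to the Laplacian setting, with the $r_1$-regularity of $\mathcal{G}_1$ serving as the essential hypothesis that converts all adjacency-type blocks into Laplacian-type ones. The first step is to read off the vertex degrees in $\mathcal{G}_1 \boxtimes \mathcal{G}_2$ from Definition~\ref{Def3}. A vertex $u_i$ of $\mathcal{G}_1$ is joined to every vertex of each copy of $\mathcal{G}_2$ indexed by a vertex in its closed neighborhood $N[u_i]$ (of size $r_1+1$), so its degree equals $r_1 + n_2(r_1+1)$. A vertex $v_j^i$ of the $i$-th copy of $\mathcal{G}_2$ has degree $d_{\mathcal{G}_2}(v_j) + r_1 + 1$, since it is attached to all $r_1+1$ vertices of $N[u_i]$ in $\mathcal{G}_1$. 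Using the same partition as in Theorem~\ref{Th1}, the degree matrix therefore becomes block diagonal with upper block $(r_1+n_2(r_1+1))I_{n_1}$ and lower block $(\mathcal{D}_{\mathcal{G}_2}+(r_1+1)I_{n_2})\otimes I_{n_1}$.

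Next I would assemble $xI-\mathcal{L}_{\mathcal{G}_1\boxtimes\mathcal{G}_2}$ by subtracting the adjacency block matrix from Theorem~\ref{Th1}. The crucial simplification comes from the identity $\mathcal{A}_{\mathcal{G}_1}=r_1 I_{n_1}-\mathcal{L}_{\mathcal{G}_1}$, valid because $\mathcal{G}_1$ is $r_1$-regular. This transforms the diagonal block into $(x-n_2(r_1+1))I_{n_1}-\mathcal{L}_{\mathcal{G}_1}$, rewrites the off-diagonal coupling blocks as $\mathbf{1}_{n_2}^T\otimes((r_1+1)I_{n_1}-\mathcal{L}_{\mathcal{G}_1})$ and its transpose, and turns the lower-right block into $\bigl((x-(r_1+1))I_{n_2}-\mathcal{L}_{\mathcal{G}_2}\bigr)\otimes I_{n_1}$. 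Every block is now a polynomial in $\mathcal{L}_{\mathcal{G}_1}$ or $\mathcal{L}_{\mathcal{G}_2}$, which is the decisive structural feature.

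I would then apply the Schur complement formula of Lemma~\ref{L2} with respect to the (block-diagonal, hence easily invertible) lower-right block. Its determinant is $\prod_{i=1}^{n_2}(x-(r_1+1)-\gamma_{2i})^{n_1}$. For the Schur complement, I collapse the product $[\mathbf{1}_{n_2}^T\otimes M]\bigl[((x-(r_1+1))I_{n_2}-\mathcal{L}_{\mathcal{G}_2})^{-1}\otimes I_{n_1}\bigr][\mathbf{1}_{n_2}\otimes M]$ using $(A\otimes B)(C\otimes D)=AC\otimes BD$, where $M=(r_1+1)I_{n_1}-\mathcal{L}_{\mathcal{G}_1}$. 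The scalar $\mathbf{1}_{n_2}^T((x-(r_1+1))I_{n_2}-\mathcal{L}_{\mathcal{G}_2})^{-1}\mathbf{1}_{n_2}$ is exactly $\chi_{\mathcal{L}_{\mathcal{G}_2}}(x-(r_1+1))$ by the coronal definition in Section~\ref{Def2}, shifted by $r_1+1$. Thus the Schur complement reduces to the single $n_1\times n_1$ matrix $(x-n_2(r_1+1))I_{n_1}-\mathcal{L}_{\mathcal{G}_1}-\chi_{\mathcal{L}_{\mathcal{G}_2}}(x-(r_1+1))\bigl((r_1+1)I_{n_1}-\mathcal{L}_{\mathcal{G}_1}\bigr)^2$, which is a polynomial in $\mathcal{L}_{\mathcal{G}_1}$ alone and whose determinant factors over the spectrum $\{\gamma_{1i}\}$ into the advertised product.

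I expect the main obstacle to be bookkeeping rather than any deep idea: correctly propagating the two shifts (the $n_2(r_1+1)$ coming from the enlarged degrees at $\mathcal{G}_1$-vertices and the $r_1+1$ coming from the enlarged degrees at $\mathcal{G}_2$-vertices) and keeping track of which shift ends up inside the coronal argument. A secondary nuisance is justifying the Schur reduction when $x-(r_1+1)$ hits $\sigma(\mathcal{L}_{\mathcal{G}_2})$; this is standard and handled by a density/continuity argument, since both sides of the claimed identity are polynomials in $x$.
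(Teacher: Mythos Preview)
Your proposal is correct and follows essentially the same route as the paper: write $\mathcal{L}_{\mathcal{G}_1\boxtimes\mathcal{G}_2}$ in the block form induced by the partition~(\ref{eqn2}), take the Schur complement of Lemma~\ref{L2} with respect to the lower-right block $\bigl((x-(r_1+1))I_{n_2}-\mathcal{L}_{\mathcal{G}_2}\bigr)\otimes I_{n_1}$, collapse the Kronecker products to extract $\chi_{\mathcal{L}_{\mathcal{G}_2}}(x-(r_1+1))$, and then factor over $\sigma(\mathcal{L}_{\mathcal{G}_1})$. The only cosmetic difference is that the paper keeps the coupling blocks as $I_{n_1}+\mathcal{A}_{\mathcal{G}_1}$ and converts $(1+\lambda_{1i})^2$ to $(r_1+1-\gamma_{1i})^2$ at the end, whereas you rewrite them as $(r_1+1)I_{n_1}-\mathcal{L}_{\mathcal{G}_1}$ at the outset; your added continuity remark for the singular values of $x$ is a welcome detail the paper omits.
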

    \begin{proof}
        From the partition generated by equation (\ref{eqn2}), the adjacency matrix of $\mathcal{G}_1\boxtimes\mathcal{G}_2$ is,\\
        $$\mathcal{A}_{\mathcal{G}_1\boxtimes\mathcal{G}_2}=\begin{bmatrix}
            \mathcal{A}_{\mathcal{G}_1}&\textbf{1}^T_{n_2}\otimes (I_{n_1}+\mathcal{A}_{\mathcal{G}_1}))\\
            \textbf{1}_{n_2}\otimes (I_{n_1}+\mathcal{A}_{\mathcal{G}_1}))&\mathcal{A}_{\mathcal{G}_2}\otimes I_{n_1}
        \end{bmatrix}$$
        and the diagonal degree matrix of $\mathcal{G}_1\boxtimes\mathcal{G}_2$ is,\\
        $$\mathcal{D}_{\mathcal{G}_1\boxtimes\mathcal{G}_2}=\begin{bmatrix}
            (n_2+1)\mathcal{D}_{\mathcal{G}_1}+n_2I_{n_1}&\textbf{0}_{n_1\times n_1n_2}\\
            \textbf{0}_{n_1n_2\times n_1}&\mathcal{D}_{\mathcal{G}_2}\otimes I_{n_1}+I_{n_2}\otimes (\mathcal{D}_{\mathcal{G}_1}+I_{n_1})
        \end{bmatrix}$$
        Then the Laplacian matrix is,\
        $$\mathcal{L}_{\mathcal{G}_1\boxtimes\mathcal{G}_2}=\begin{bmatrix}
            n_2(\mathcal{D}_{\mathcal{G}_1}+I_{n_1})+\mathcal{L}_{\mathcal{G}_1}&-\textbf{1}^T_{n_2}\otimes (I_{n_1}+\mathcal{A}_{\mathcal{G}_1})\\
            -\textbf{1}_{n_2}\otimes (I_{n_1}+\mathcal{A}_{\mathcal{G}_1}))&\mathcal{L}_{\mathcal{G}_2}\otimes I_{n_1}+I_{n_2}\otimes(\mathcal{D}_{\mathcal{G}_1}+I_{n_1})
        \end{bmatrix}$$
        The characteristics polynomial of the Laplacian matrix is,
        \begin{equation}
            \begin{split}
                f(\mathcal{L}_{\mathcal{G}_1\boxtimes\mathcal{G}_2},x)&=\det(xI_{n_1(1+n_2)}-\mathcal{L}_{\mathcal{G}_1\boxtimes\mathcal{G}_2})\\
                &=\det\begin{bmatrix}
                         (x-n_2r_1-n_2)I_{n_1}-\mathcal{L}_{\mathcal{G}_1}&-\textbf{1}^T_{n_2}\otimes (I_{n_1}+\mathcal{A}_{\mathcal{G}_1})\\
                         -\textbf{1}_{n_2}\otimes (I_{n_1}+\mathcal{A}_{\mathcal{G}_1}) &(x-r_1-1)I_{n_2}-\mathcal{L}_{\mathcal{G}_2}\otimes I_{n_1}
                \end{bmatrix}\\
                &=\det[\{(x-r_1-1)I_{n_2}-\mathcal{L}_{\mathcal{G}_2}\}\otimes I_{n_1}]\cdot \det[B_L]\\
                &=\det[(x-r_1-1)I_{n_2}-\mathcal{L}_{\mathcal{G}_2}]^{n_1}\cdot\det[I_{n_1}]^{n_2}\cdot \det[B_L]\\
                &=\prod_{i=1}^{n_2}(x-(r_1+1)-\gamma_{2i})^{n_1}\cdot\det[B_L]
            \end{split}
        \end{equation}
        where \begin{equation*}
            \begin{split}
                \det[B_L]&=\det\Big[(x-n_2r_1-n_2)I_{n_1}-\mathcal{L}_{\mathcal{G}_1}-\big(\textbf{1}^T_{n_2}\otimes (I_{n_1}+\mathcal{A}_{\mathcal{G}_1})\big)\{(x-r_1-1)I_{n_2}\\&~~~~-\mathcal{L}_{\mathcal{G}_2}\}^{-1}\otimes I_{n_1}\big(\textbf{1}_{n_2}\otimes (I_{n_1}+\mathcal{A}_{\mathcal{G}_1})\big)\Big]\\
                &=\det\Big[(x-n_2r_1-n_2)I_{n_1}-\mathcal{L}_{\mathcal{G}_1}-\chi_{\mathcal{L}_{\mathcal{G}_2}}(x-r_1-1)(I_{n_1}+\mathcal{A}_{\mathcal{G}_1})^2\Big]\\
                &=\prod_{i=1}^{n_1}\Big[x-n_2(r_1+1)-\gamma_{1i}-\chi_{\mathcal{L}_{\mathcal{G}_2}}(x-r_1-1)(1+\lambda_{1i})^2\Big]\\
                &=\prod_{i=1}^{n_1}\Big[x-n_2(r_1+1)-\gamma_{1i}-\chi_{\mathcal{L}_{\mathcal{G}_2}}(x-(r_1+1))(r_1+1-\gamma_{1i})^2\Big]
            \end{split}
        \end{equation*}
        Therefore, $f(\mathcal{L}_{\mathcal{G}_1\boxtimes\mathcal{G}_2},x)=\prod_{i=1}^{n_2}(x-(r_1+1)-\gamma_{2i})^{n_1}\cdot\prod_{i=1}^{n_1}\Big[x-n_2(r_1+1)-\gamma_{1i}-\chi_{\mathcal{L}_{\mathcal{G}_2}}(x-(r_1+1))(r_1+1-\gamma_{1i})^2\Big].$        
    \end{proof}
\textbf{Note:} The above result has already been proved by Harishchandra S. Ramane et al.~\cite{ramane2021polynomials}, we are adding this theorem for the sake of proper understanding of the further application.

    \begin{corollary}\label{C2}
        Let $\mathcal{G}_1$ and $\mathcal{G}_2$ be two Laplacian cospectral graphs of order $n$ having Laplacian eigenvalues $\{\gamma_{i1},\gamma_{i2},\ldots,\gamma_{in}\},$ $i=1,2$ and $\mathcal{G}$ be any arbitrary graph of order $m$ having Laplacian eigenvalues $\{\gamma_1,\gamma_2,\ldots,\gamma_m\}$, then 
            \begin{enumerate}
                \item $\mathcal{G}_1\boxtimes \mathcal{G}$ and $\mathcal{G}_2\boxtimes \mathcal{G}$ are Laplacian cospectral.
                \item $\mathcal{G}\boxtimes \mathcal{G}_1$ and $\mathcal{G}\boxtimes \mathcal{G}_2$ are Laplacian cospectral if $\chi_{L_{\mathcal{G}_1}}(x)=\chi_{L_{\mathcal{G}_2}}(x).$
            \end{enumerate}
    \end{corollary}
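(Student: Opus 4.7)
The plan is to mirror the proof of Corollary \ref{C1}: simply invoke Theorem \ref{Th2} on each side of the claimed cospectrality and verify, under the stated hypotheses, that the resulting Laplacian characteristic polynomials coincide term-by-term. Since Theorem \ref{Th2} demands that the outer graph in the product be regular, I would first flag the implicit regularity assumption needed to apply it. For part (i), regularity of $\mathcal{G}_1$ forces regularity of $\mathcal{G}_2$ with the same degree $r_1$ (Laplacian cospectral graphs share the same number of edges, and a regular partner of a regular graph on $n$ vertices with the same number of edges has the same degree). For part (ii), one needs $\mathcal{G}$ itself to be $r$-regular.

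For part (i), I would write
\[
f(\mathcal{L}_{\mathcal{G}_1\boxtimes\mathcal{G}},x)=\prod_{i=1}^{m}(x-(r_1+1)-\gamma_{i})^{n}\cdot\prod_{i=1}^{n}\Big[x-m(r_1+1)-\gamma_{1i}-\chi_{\mathcal{L}_{\mathcal{G}}}(x-(r_1+1))(r_1+1-\gamma_{1i})^2\Big],
\]
and observe that the right side depends on $\mathcal{G}_1$ only through $r_1$ and the multiset $\{\gamma_{1i}\}$. Since $\mathcal{G}_2$ shares both data points with $\mathcal{G}_1$, substituting $\gamma_{2i}$ for $\gamma_{1i}$ produces the formula for $f(\mathcal{L}_{\mathcal{G}_2\boxtimes\mathcal{G}},x)$, giving the desired equality.

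For part (ii), I would apply Theorem \ref{Th2} with the common graph $\mathcal{G}$ (assumed $r$-regular) as outer factor to obtain
\[
f(\mathcal{L}_{\mathcal{G}\boxtimes\mathcal{G}_1},x)=\prod_{i=1}^{n}(x-(r+1)-\gamma_{1i})^{m}\cdot\prod_{i=1}^{m}\Big[x-n(r+1)-\gamma_{i}-\chi_{\mathcal{L}_{\mathcal{G}_1}}(x-(r+1))(r+1-\gamma_{i})^2\Big].
\]
Here $\mathcal{G}_1$ enters in two ways: through its Laplacian eigenvalues in the first product, which match those of $\mathcal{G}_2$ by the cospectrality hypothesis, and through its Laplacian coronal in the second product, which matches that of $\mathcal{G}_2$ precisely by the extra hypothesis $\chi_{L_{\mathcal{G}_1}}(x)=\chi_{L_{\mathcal{G}_2}}(x)$. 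Thus the polynomial equals $f(\mathcal{L}_{\mathcal{G}\boxtimes\mathcal{G}_2},x)$, finishing the argument.

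The proof is essentially bookkeeping; the only conceptual point worth stressing is why the coronal hypothesis is needed in (ii) but not in (i), namely that Laplacian cospectrality alone does not force equality of coronals (the coronal depends on row-sum information that can differ between cospectral mates), whereas the eigenvalues of the outer graph enter Theorem \ref{Th2} only through the spectrum and the regularity parameter $r_1$, both of which are cospectral invariants for regular graphs. This is the only subtlety I anticipate; beyond it, the verification is a direct substitution into the formula of Theorem \ref{Th2}.
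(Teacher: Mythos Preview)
Your proposal is correct and follows essentially the same route as the paper: both proofs simply write out the Laplacian characteristic polynomial from Theorem~\ref{Th2} for each side and verify the expressions agree under the hypotheses. Your explicit flagging of the regularity assumption (and the observation that Laplacian cospectrality with a regular graph forces regularity with the same degree) is a welcome addition that the paper leaves implicit, but the underlying argument is identical.
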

    \begin{proof}
        It is easily seen that,
        \begin{equation*}
            \begin{split}
                f(\mathcal{L}_{\mathcal{G}_1\boxtimes\mathcal{G}},x)&=\prod_{i=1}^{m}(x-(r_1+1)-\gamma_{i})^{n}\cdot\prod_{i=1}^{n}\Big[x-m(r_1+1)-\gamma_{1i}-\chi_{\mathcal{L}_{\mathcal{G}_2}}(x-(r_1+1))\\&~~~~(r_1+1-\gamma_{1i})^2\Big]\\
                &=\prod_{i=1}^{m}(x-(r_1+1)-\gamma_{i})^{n}\cdot\prod_{i=1}^{n}\Big[x-m(r_1+1)-\gamma_{2i}-\chi_{\mathcal{L}_{\mathcal{G}_2}}(x-(r_1+1))\\&~~~~(r_1+1-\gamma_{2i})^2\Big]\\
                &=f(\mathcal{L}_{\mathcal{G}_2\boxtimes\mathcal{G}},x).
            \end{split}
        \end{equation*}
        Similarly,
        \begin{equation*}
            \begin{split}
                f(\mathcal{L}_{\mathcal{G}\boxtimes\mathcal{G}_1},x)&=\prod_{i=1}^{n}(x-(r_1+1)-\gamma_{1i})^{m}\cdot\prod_{i=1}^{m}\Big[x-n(r_1+1)-\gamma_{i}-\chi_{\mathcal{L}_{\mathcal{G}_1}}(x-(r_1+1))\\&~~~~(r_1+1-\gamma_{i})^2\Big]\\
                &=\prod_{i=1}^{n}(x-(r_1+1)-\gamma_{2i})^{m}\cdot\prod_{i=1}^{m}\Big[x-n(r_1+1)-\gamma_{i}-\chi_{\mathcal{L}_{\mathcal{G}_2}}(x-(r_1+1))\\&~~~~(r_1+1-\lambda_{i})^2\Big]\\
                &=f(\mathcal{L}_{\mathcal{G}\boxtimes\mathcal{G}_2},x).
            \end{split}
        \end{equation*}
    \end{proof}

\subsection{Signless Laplacian spectrum}
    \begin{theorem}\label{Th3}
        Let $\mathcal{G}_1$ be a $r_1$-regular graph of order $n_1$, and $\mathcal{G}_2$ be an arbitrary graph of order $n_2$ with their associated signless Laplacian eigenvalues $\nu_{i1},\nu_{i2},\ldots,\nu_{in_i}$, $i=1,2.$ Then the Signless Laplacian characteristics polynomial of $\mathcal{G}_1\boxtimes\mathcal{G}_2$ is\\ $f(\mathcal{Q}_{\mathcal{G}_1\boxtimes\mathcal{G}_2},x)=\prod_{i=1}^{n_2}(x-(r_1+1)-\nu_{2i})^{n_1}\cdot\prod_{i=1}^{n_1}\big[x-n_2(r_1+1)-\nu_{1i}-\chi_{\mathcal{Q}_{\mathcal{G}_2}}(x-(r_1+1))(1-r_1+\nu_{1i})^2\big].$  
    \end{theorem}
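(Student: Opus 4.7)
The plan is to mirror the proof of Theorem \ref{Th2} line by line, replacing the subtraction $\mathcal{D}-\mathcal{A}$ with the addition $\mathcal{D}+\mathcal{A}$ and translating the final product from Laplacian to signless Laplacian eigenvalues. First I would assemble $\mathcal{Q}_{\mathcal{G}_1\boxtimes\mathcal{G}_2}=\mathcal{D}_{\mathcal{G}_1\boxtimes\mathcal{G}_2}+\mathcal{A}_{\mathcal{G}_1\boxtimes\mathcal{G}_2}$ using the block forms already written out in the proof of Theorem \ref{Th2}. Because $\mathcal{G}_1$ is $r_1$-regular, I can fold $(n_2+1)\mathcal{D}_{\mathcal{G}_1}+n_2 I_{n_1}+\mathcal{A}_{\mathcal{G}_1}$ into the clean expression $n_2(r_1+1)I_{n_1}+\mathcal{Q}_{\mathcal{G}_1}$ for the $(1,1)$ block, and combine the diagonal pieces of the $(2,2)$ block into $\mathcal{Q}_{\mathcal{G}_2}\otimes I_{n_1}+(r_1+1)I_{n_1 n_2}$. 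The off-diagonal blocks inherit sign $+\mathbf{1}_{n_2}^T\otimes(I_{n_1}+\mathcal{A}_{\mathcal{G}_1})$ (and its transpose), so after forming $xI-\mathcal{Q}_{\mathcal{G}_1\boxtimes\mathcal{G}_2}$ they pick up a minus sign but contribute a $(I_{n_1}+\mathcal{A}_{\mathcal{G}_1})^2$ factor upon Schur reduction, exactly as in the Laplacian proof.

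Next I would apply Lemma \ref{L2} with $\mathcal{A}_4=(x-(r_1+1))I_{n_2}\otimes I_{n_1}-\mathcal{Q}_{\mathcal{G}_2}\otimes I_{n_1}$, which is invertible off the roots of $\det\bigl[(x-(r_1+1))I_{n_2}-\mathcal{Q}_{\mathcal{G}_2}\bigr]$. Using the Kronecker-product determinant identity gives the first factor $\prod_{i=1}^{n_2}(x-(r_1+1)-\nu_{2i})^{n_1}$, and the Schur complement reduces to
\[
B_Q=(x-n_2(r_1+1))I_{n_1}-\mathcal{Q}_{\mathcal{G}_1}-\chi_{\mathcal{Q}_{\mathcal{G}_2}}(x-(r_1+1))\,(I_{n_1}+\mathcal{A}_{\mathcal{G}_1})^2,
\]
via Definition \ref{Def2} applied to $\mathbf{1}_{n_2}^T\bigl((x-(r_1+1))I_{n_2}-\mathcal{Q}_{\mathcal{G}_2}\bigr)^{-1}\mathbf{1}_{n_2}$.

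Finally I would use $r_1$-regularity once more to diagonalize $B_Q$: the matrices $\mathcal{Q}_{\mathcal{G}_1}=r_1 I_{n_1}+\mathcal{A}_{\mathcal{G}_1}$ and $I_{n_1}+\mathcal{A}_{\mathcal{G}_1}$ share an orthonormal eigenbasis with $\mathcal{A}_{\mathcal{G}_1}$, and the eigenvalue $\lambda_{1i}$ of $\mathcal{A}_{\mathcal{G}_1}$ satisfies $\lambda_{1i}=\nu_{1i}-r_1$, so $(1+\lambda_{1i})^2=(1-r_1+\nu_{1i})^2$. Multiplying the resulting $n_1$ eigenvalues of $B_Q$ produces the advertised second factor. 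The main bookkeeping obstacle is the scalar shift: one must carefully track the constants $n_2(r_1+1)$ in the $(1,1)$ block and $(r_1+1)$ in the $(2,2)$ block so that $\mathcal{Q}_{\mathcal{G}_1}$ and $\mathcal{Q}_{\mathcal{G}_2}$ appear cleanly in the final expression, and one must note that the substitution $\lambda_{1i}=\nu_{1i}-r_1$ yields the asymmetric-looking term $(1-r_1+\nu_{1i})^2$ instead of the $(r_1+1-\gamma_{1i})^2$ that arose in the Laplacian case from $\lambda_{1i}=r_1-\gamma_{1i}$.
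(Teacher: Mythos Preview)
Your proposal is correct and follows exactly the approach of the paper: it writes the same block decomposition of $\mathcal{Q}_{\mathcal{G}_1\boxtimes\mathcal{G}_2}$, applies Lemma~\ref{L2} to peel off the factor $\prod_{i=1}^{n_2}(x-(r_1+1)-\nu_{2i})^{n_1}$, and then reduces the Schur complement via the coronal and the substitution $\lambda_{1i}=\nu_{1i}-r_1$, which is precisely what the paper means by ``the later part of proof is same as Theorem~\ref{Th2}.'' You have in fact spelled out the omitted details (the scalar shifts and the eigenvalue translation) more carefully than the paper does.
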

    \begin{proof}
        From the partition generated by equation (\ref{eqn2}), the adjacency matrix of $\mathcal{G}_1\boxtimes\mathcal{G}_2$ is,\\
        $$\mathcal{A}_{\mathcal{G}_1\boxtimes\mathcal{G}_2}=\begin{bmatrix}
            \mathcal{A}_{\mathcal{G}_1}&\textbf{1}^T_{n_2}\otimes (I_{n_1}+\mathcal{A}_{\mathcal{G}_1}))\\
            \textbf{1}_{n_2}\otimes (I_{n_1}+\mathcal{A}_{\mathcal{G}_1}))&\mathcal{A}_{\mathcal{G}_2}\otimes I_{n_1}
        \end{bmatrix}$$
        and the diagonal degree matrix of $\mathcal{G}_1\boxtimes\mathcal{G}_2$ is,\\
        $$\mathcal{D}_{\mathcal{G}_1\boxtimes\mathcal{G}_2}=\begin{bmatrix}
            (n_2+1)\mathcal{D}_{\mathcal{G}_1}+n_2I_{n_1}&\textbf{0}_{n_1\times n_1n_2}\\
            \textbf{0}_{n_1n_2\times n_1}&\mathcal{D}_{\mathcal{G}_2}\otimes I_{n_1}+I_{n_2}\otimes (\mathcal{D}_{\mathcal{G}_1}+I_{n_1})
        \end{bmatrix}$$
        Then the Signless Laplacian matrix is,\\
        $$\mathcal{Q}_{\mathcal{G}_1\boxtimes\mathcal{G}_2}=\begin{bmatrix}
            n_2(\mathcal{D}_{\mathcal{G}_1}+I_{n_1})+\mathcal{Q}_{\mathcal{G}_1}&\textbf{1}^T_{n_2}\otimes (I_{n_1}+\mathcal{A}_{\mathcal{G}_1})\\
            \textbf{1}_{n_2}\otimes (I_{n_1}+\mathcal{A}_{\mathcal{G}_1}))&\mathcal{Q}_{\mathcal{G}_2}\otimes I_{n_1}+I_{n_2}\otimes(\mathcal{D}_{\mathcal{G}_1}+I_{n_1})
        \end{bmatrix}$$
        The later part of the proof is the same as Theorem \ref{Th2}.
    \end{proof}

    \begin{corollary}
        Let $\mathcal{G}_1$ and $\mathcal{G}_2$ be two cospectral graphs and $\mathcal{G}$ be any arbitrary  graph, then 
            \begin{enumerate}
                \item $\mathcal{G}_1\boxtimes \mathcal{G}$ and $\mathcal{G}_2\boxtimes \mathcal{G}$ are signless Laplacian cospectral.
                \item $\mathcal{G}\boxtimes \mathcal{G}_1$ and $\mathcal{G}\boxtimes \mathcal{G}_2$ are signless Laplacian cospectral if $\chi_{Q_{\mathcal{G}_1}}(x)=\chi_{Q_{\mathcal{G}_2}}(x).$
            \end{enumerate}
    \end{corollary}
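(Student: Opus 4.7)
The plan is to mimic exactly the pattern used in Corollaries \ref{C1} and \ref{C2}, invoking Theorem \ref{Th3} as a black box and tracking which factors depend only on the signless Laplacian spectrum and which depend on the coronal. Before anything else I would record a small but essential observation: Theorem \ref{Th3} requires the left-hand graph of the product to be regular, so in part (i) the graphs $\mathcal{G}_1, \mathcal{G}_2$ are implicitly regular, and since two signless Laplacian cospectral regular graphs must share the same regularity degree (the largest $\mathcal{Q}$-eigenvalue of an $r$-regular graph is $2r$), we may denote the common degree by $r$. In part (ii) it is $\mathcal{G}$ that must be regular; call its degree $r$.

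For part (i), I would write out $f(\mathcal{Q}_{\mathcal{G}_j\boxtimes \mathcal{G}},x)$ for $j=1,2$ using Theorem \ref{Th3} with $\mathcal{G}_j$ in the first slot and $\mathcal{G}$ in the second. The first product becomes $\prod_{i=1}^{m}(x-(r+1)-\nu_i)^{n}$, depending only on the $\mathcal{Q}$-spectrum of $\mathcal{G}$, and is therefore identical in both expressions. The second product is $\prod_{i=1}^{n}\bigl[x-m(r+1)-\nu_{ji}-\chi_{\mathcal{Q}_{\mathcal{G}}}(x-(r+1))(1-r+\nu_{ji})^2\bigr]$; since $\nu_{1i}=\nu_{2i}$ by hypothesis, this also coincides for $j=1,2$. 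The two characteristic polynomials therefore agree.

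For part (ii), I would take $\mathcal{G}$ in the first slot and $\mathcal{G}_j$ in the second, obtaining
\[
f(\mathcal{Q}_{\mathcal{G}\boxtimes\mathcal{G}_j},x)=\prod_{i=1}^{n}(x-(r+1)-\nu_{ji})^{m}\cdot\prod_{i=1}^{m}\bigl[x-n(r+1)-\nu_{i}-\chi_{\mathcal{Q}_{\mathcal{G}_j}}(x-(r+1))(1-r+\nu_{i})^{2}\bigr].
\]
The first product is identical for $j=1,2$ because $\mathcal{G}_1$ and $\mathcal{G}_2$ share the signless Laplacian spectrum, and the second product is identical precisely because of the extra hypothesis $\chi_{\mathcal{Q}_{\mathcal{G}_1}}(x)=\chi_{\mathcal{Q}_{\mathcal{G}_2}}(x)$. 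Combining these two coincidences yields the claim.

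There is no real obstacle here; the only subtlety worth flagging is \emph{why} the additional coronal hypothesis is forced in part (ii) but not in part (i). In part (i) only the spectra $\nu_{ji}$ of the switched graphs appear, whereas in part (ii) those graphs enter through their coronal $\chi_{\mathcal{Q}_{\mathcal{G}_j}}$, which is in general not determined by the signless Laplacian spectrum alone, so the extra assumption cannot be dropped. I would make this brief remark after the computation to parallel the structure of Corollaries \ref{C1} and \ref{C2}.
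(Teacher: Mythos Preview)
Your proposal is correct and follows precisely the paper's own approach: the paper's entire proof is the single line ``Proof is same as Corollary~\ref{C2},'' and you have carried out exactly that template substitution using Theorem~\ref{Th3}. Your added remarks about the implicit regularity hypothesis and the reason the coronal condition is needed only in part~(ii) are careful clarifications that go beyond what the paper itself records.
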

    \begin{proof}
        The proof is the same as Corollary \ref{C2}.
    \end{proof}

\section{Application}\label{S4}
    \subsection{Kirchhoff Index}
    \noindent In 1993 Klein and Randić~\cite{klein1993rd} proposed a novel concept known as resistance distance. This is based on the electric resistance of a network that corresponds to a graph, where the resistance distance between any two adjacent vertices is 1 ohm. This resistance distance, when summed over all pairs of vertices in a graph, serves as a new graph invariant. The Kirchhoff index, which is used to calculate electric resistance through Kirchhoff laws, is also defined for a graph $\mathcal{G}$ with $n(>2)$ vertices, as follows: $$Kf(\mathcal{G})=n\sum_{i=2}^n\frac{1}{\mu_i}.$$
    This renowned connection between the Laplaciathe n spectrum and Kirchhoff index was determined in 1996 by Zhu et al.~\cite{zhu1996extensions} and Gutman and Mohar~\cite{gutman1996quasi}.

    \begin{theorem}
        Let $\mathcal{G}_1$ be a $r_1$-regular graph of order $n_1$ and $\mathcal{G}_2$ be an arbitrary graph of order $n_2$. Also let the Laplacian spectrum be $0=\gamma_{i1}\leq\gamma_{i2}\leq\ldots\leq\gamma_{in_i};~ i=1,2.$ Then $$Kf(\mathcal{G}_1\boxtimes\mathcal{G}_2)=n_1(n_2+1)\Bigg[\sum_{i=2}^{n_2}\frac{n_1}{\gamma_{2i}+r_1+1}+\sum_{i=1}^{n_1}\frac{(r_1+1)(n_2+1)+\gamma_{1i}}{\gamma_{1i}(r_1+1)+\gamma_{1i}n_2(2+2r_1-\gamma_{1i})}\Bigg].$$
    \end{theorem}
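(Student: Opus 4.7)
The strategy is to read off every Laplacian eigenvalue of $\mathcal{G}_1\boxtimes\mathcal{G}_2$ from Theorem \ref{Th2} and then apply the eigenvalue form of the Kirchhoff index, $Kf(\mathcal{G})=n\sum_{i\ge 2}1/\mu_i$.

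The key simplification is that the Laplacian coronal has a universal closed form. Since $\mathcal{L}_{\mathcal{G}_2}\textbf{1}_{n_2}=\textbf{0}$, one has $(xI_{n_2}-\mathcal{L}_{\mathcal{G}_2})^{-1}\textbf{1}_{n_2}=x^{-1}\textbf{1}_{n_2}$, so $\chi_{\mathcal{L}_{\mathcal{G}_2}}(x)=n_2/x$, and after the shift $\chi_{\mathcal{L}_{\mathcal{G}_2}}(x-(r_1+1))=n_2/(x-(r_1+1))$. Plugging this into Theorem \ref{Th2} and clearing the $(x-(r_1+1))^{n_1}$ denominator against the $i=1$ factor of the first product (for which $\gamma_{21}=0$), the characteristic polynomial can be rewritten as
\[
\prod_{i=2}^{n_2}\bigl(x-(r_1+1)-\gamma_{2i}\bigr)^{n_1}\;\cdot\;\prod_{i=1}^{n_1}Q_i(x),
\]
where each $Q_i(x)=(x-(r_1+1))(x-n_2(r_1+1)-\gamma_{1i})-n_2(r_1+1-\gamma_{1i})^2$ is a quadratic in $x$. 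A short expansion gives
\[
Q_i(x)=x^{2}-\bigl((n_2+1)(r_1+1)+\gamma_{1i}\bigr)\,x+\gamma_{1i}\bigl[(r_1+1)+n_2(2+2r_1-\gamma_{1i})\bigr],
\]
and a degree count $n_1(n_2-1)+2n_1=n_1(n_2+1)$ confirms that all Laplacian eigenvalues are accounted for.

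Next I would tabulate the reciprocal sum. The first product contributes $n_1\sum_{i=2}^{n_2}1/(\gamma_{2i}+r_1+1)$ to $\sum_{i\ge 2}1/\mu_i$. For the second product, $Q_1$ factors as $x(x-(n_2+1)(r_1+1))$ since $\gamma_{11}=0$, supplying the unique zero Laplacian eigenvalue (assuming $\mathcal{G}_1\boxtimes\mathcal{G}_2$ is connected) together with a nonzero root contributing $1/((n_2+1)(r_1+1))$. For each $i\ge 2$, both roots $\alpha_i,\beta_i$ of $Q_i$ are nonzero, so Vieta's formulas give
\[
\frac{1}{\alpha_i}+\frac{1}{\beta_i}=\frac{(r_1+1)(n_2+1)+\gamma_{1i}}{\gamma_{1i}(r_1+1)+\gamma_{1i}n_2(2+2r_1-\gamma_{1i})}.
\]
Summing these contributions and multiplying by the total vertex count $n_1(n_2+1)$ produces the stated expression.

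The main obstacle is the cancellation step: the Schur complement in Theorem \ref{Th2} produces a rational expression with apparent poles at $x=r_1+1$, and one must verify that these are exactly cancelled by the $i=1$ factor of the first product, so that the full determinant really is a polynomial and the $2n_1$ roots of $\prod_i Q_i(x)$ are legitimate Laplacian eigenvalues of the product graph. After that, correctly identifying and excluding the single zero eigenvalue coming from $Q_1$ is the only subtle bookkeeping before the Vieta computation closes the proof.
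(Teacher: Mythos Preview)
Your proposal is correct and follows essentially the same route as the paper: substitute $\chi_{\mathcal{L}_{\mathcal{G}_2}}(x)=n_2/x$ into Theorem~\ref{Th2}, cancel the $(x-(r_1+1))^{n_1}$ factor against the $\gamma_{21}=0$ term of the first product, read off the eigenvalues $r_1+1+\gamma_{2i}$ ($i\ge 2$) together with the $2n_1$ roots of the quadratics $Q_i$, and finish with Vieta. The paper records the pole/cancellation issue in the same way (``$(r_1+1+\gamma_{21})$ is not a root as $r_1+1$ is a pole''); your treatment is in fact slightly more careful in that you isolate $Q_1(x)=x\bigl(x-(n_2+1)(r_1+1)\bigr)$ explicitly to locate the unique zero Laplacian eigenvalue, whereas the paper applies the Vieta identity uniformly for $i=1,\dots,n_1$ and leaves the $i=1$ term of the second sum formally undefined.
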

    \begin{proof}
        From Theorem \ref{Th2} we have the Laplacian characteristics polynomial as\\
        $f(\mathcal{L}_{\mathcal{G}_1\boxtimes\mathcal{G}_2},x)=\prod_{i=1}^{n_2}(x-(r_1+1)-\gamma_{2i})^{n_1}\cdot\prod_{i=1}^{n_1}\Big[x-n_2(r_1+1)-\gamma_{1i}-\chi_{\mathcal{L}_{\mathcal{G}_2}}(x-(r_1+1))(r_1+1-\gamma_{1i})^2\Big].$\\
        Now, each row sum of the Laplacian matrix of $\mathcal{G}_2$ being $0$, the Laplacian coronal of $\mathcal{G}_2$ is given by $$\chi_{L_{\mathcal{G}_2}}=\frac{n_2}{x}.$$
        So, \begin{equation}\label{Eqn5}
            \begin{split}
                &f(\mathcal{L}_{\mathcal{G}_1\boxtimes\mathcal{G}_2},x)=\prod_{i=1}^{n_2}(x-(r_1+1)-\gamma_{2i})^{n_1}\cdot\prod_{i=1}^{n_1}\Big[x-n_2(r_1+1)-\gamma_{1i}-\frac{n_2}{x-r_1-1}\\&~~~~(r_1+1-\gamma_{1i})^2\Big]\\
                &=\prod_{i=1}^{n_2}(x-r_1-\gamma_{2i})^{n_1}\cdot\prod_{i=1}^{n_1}\frac{1}{x-r_1-1}\big[x^2-((r_1+1)(n_2+1)+\gamma_{1i})x+\gamma_{1i}(r_1+1)\\&~~~~+n_2\gamma_{1i}(2+2r_1-\gamma_{1i})\big].
            \end{split}
        \end{equation}
        Therefore, the roots of the above Laplacian characteristic polynomial are:
        \begin{enumerate}
            \item $r_1+1+\gamma_{22},r_1+1+\gamma_{23},\ldots,r_1+1+\gamma_{2n_2}$, each repeated $n_1$ times. [$(r_1+1+\gamma_{21})$ is not a root as $r_1+1$ is a pole of $\chi_{\mathcal{G}_2}(x-r_1-1)$].
            \item Roots of the polynomial $x^2-((r_1+1)(n_2+1)+\gamma_{1i})x+\gamma_{1i}(r_1+1)+n_2\gamma_{1i}(2+2r_1-\gamma_{1i}),$ for each $i=1,2,\ldots,n_1$.
        \end{enumerate}
        Let $\alpha$ and $\beta$ be the roots of the equation 
        \begin{equation}\label{Eqn6}
            x^2-((r_1+1)(n_2+1)+\gamma_{1i})x+\gamma_{1i}(r_1+1)+n_2\gamma_{1i}(2+2r_1-\gamma_{1i}).
        \end{equation}
        Then,
        \begin{equation*}
            \begin{split}
                \frac{1}{\alpha}+\frac{1}{\beta}&=\frac{\alpha+\beta}{\alpha\beta}\\
                &=\frac{(r_1+1)(n_2+1)+\gamma_{1i}}{\gamma_{1i}(r_1+1)+n_2\gamma_{1i}(2+2r_1-\gamma_{1i})}.
            \end{split}
        \end{equation*}
        Hence, the Kirchhoff index of $\mathcal{G}_1\boxtimes\mathcal{G}_2$ is 
        $$Kf(\mathcal{G}_1\boxtimes\mathcal{G}_2)=n_1(n_2+1)\Bigg[\sum_{i=2}^{n_2}\frac{n_1}{\gamma_{2i}+r_1+1}+\sum_{i=1}^{n_1}\frac{(r_1+1)(n_2+1)+\gamma_{1i}}{\gamma_{1i}(r_1+1)+\gamma_{1i}n_2(2+2r_1-\gamma_{1i})}\Bigg].$$ 
    \end{proof}

    \subsection{Spanning Tree}
        The concept of a spanning tree~\cite{cvetkovic1980spectra} refers to a tree that is also a subgraph of a given graph $\mathcal{G}$. The number of spanning trees for a graph $\mathcal{G}$ is represented by $t(\mathcal{G})$. If $\mathcal{G}$ is a connected graph with $n$ vertices and Laplacian eigenvalues $0=\gamma_1\leq\gamma_2\leq\ldots\leq\gamma_n$, then the number of spanning trees can be calculated using the formula: $$t(\mathcal{G})=\frac{\gamma_2(\mathcal{G})\gamma_3(\mathcal{G})\ldots\gamma_n(\mathcal{G})}{n}.$$

        \begin{theorem}
            Let $\mathcal{G}_1$ be a $r_1$-regular graph of order $n_1$ and $\mathcal{G}_2$ be an arbitrary graph of order $n_2$. Also let the Laplacian spectrum be $0=\gamma_{i1}\leq\gamma_{i2}\leq\ldots\leq\gamma_{in_i};~ i=1,2.$ Then the number of spanning trees is given by, $$t(\mathcal{G}_1\boxtimes\mathcal{G}_2)=\frac{1}{n_1(n_2+1)}\Big[\prod_{i=2}^{n_2}(r_1+1+\gamma_{2i})^{n_1}\cdot\prod_{i=1}^{n_1}\gamma_{1i}(r_1+1)+n_2\gamma_{1i}(2+2r_1-\gamma_{1i})\Big].$$
        \end{theorem}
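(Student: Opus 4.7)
The plan is to apply the Matrix--Tree identity $t(\mathcal{G})=\frac{1}{|V(\mathcal{G})|}\prod_{\gamma_i\neq 0}\gamma_i$ with $|V(\mathcal{G}_1\boxtimes\mathcal{G}_2)|=n_1(n_2+1)$, and to read the nonzero Laplacian eigenvalues directly off the factorization of $f(\mathcal{L}_{\mathcal{G}_1\boxtimes\mathcal{G}_2},x)$ already computed in Equation~(\ref{Eqn5}) in the proof of the Kirchhoff index theorem. Thus the bulk of the spectral work has already been done; what remains is to list the roots correctly and to multiply them.

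First I would revisit Equation~(\ref{Eqn5}) and identify the two sources of eigenvalues. The first factor $\prod_{i=1}^{n_2}(x-(r_1+1)-\gamma_{2i})^{n_1}$ yields the values $r_1+1+\gamma_{2i}$ with multiplicity $n_1$; however, for $i=1$ (where $\gamma_{21}=0$) these $n_1$ copies of $r_1+1$ are absorbed against the $n_1$ poles of $\chi_{\mathcal{L}_{\mathcal{G}_2}}(x-r_1-1)=n_2/(x-r_1-1)$ appearing in the second factor, leaving only the indices $i=2,\ldots,n_2$. The second factor is then a product of $n_1$ quadratics
\begin{equation*}
q_i(x)=x^2-\bigl((r_1+1)(n_2+1)+\gamma_{1i}\bigr)x+\gamma_{1i}\bigl[(r_1+1)+n_2(2+2r_1-\gamma_{1i})\bigr],
\end{equation*}
and by Vieta's formulas the product of the two roots of $q_i$ is precisely its constant term, matching the inner bracket of the claimed formula.

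The delicate point is the degenerate case $i=1$, where $\gamma_{11}=0$ forces $q_1(x)=x\bigl(x-(r_1+1)(n_2+1)\bigr)$. This quadratic supplies the unique zero Laplacian eigenvalue of the (connected) product graph, together with the nonzero eigenvalue $(r_1+1)(n_2+1)$, which must be retained in the product. After discarding the single zero root, including the residual factor $(r_1+1)(n_2+1)$, and dividing by $n_1(n_2+1)$, the factor $(n_2+1)$ cancels and leaves $(r_1+1)$ as the $i=1$ contribution, so that the product over $i$ in the stated formula may be read uniformly from $1$ to $n_1$. The main obstacle is therefore not any difficult calculation but the bookkeeping around these two removable zero-eigenvalue factors (one coming from $\gamma_{21}=0$ in the first block, the other from $\gamma_{11}=0$ in $q_1$), ensuring that the multiplicities and the overall normalization are reconciled with the final expression.
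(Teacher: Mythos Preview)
Your approach is exactly the paper's: take the Laplacian factorization already recorded in Equation~(\ref{Eqn5}), use Vieta on each quadratic $q_i$ to get $\alpha\beta=\gamma_{1i}(r_1+1)+n_2\gamma_{1i}(2+2r_1-\gamma_{1i})$, and feed everything into the Matrix--Tree identity with $|V|=n_1(n_2+1)$. The paper's own proof consists of precisely those two sentences and does no further bookkeeping.

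Where your write-up goes off the rails is the last sentence. You correctly observe that $q_1(x)=x\bigl(x-(r_1+1)(n_2+1)\bigr)$ contributes the single zero eigenvalue and a surviving factor $(r_1+1)(n_2+1)$, and that after dividing by $n_1(n_2+1)$ this leaves $(r_1+1)$. But that is \emph{not} the $i=1$ term of the displayed product: with $\gamma_{11}=0$ the printed expression $\gamma_{1i}(r_1+1)+n_2\gamma_{1i}(2+2r_1-\gamma_{1i})$ evaluates to $0$, so the formula ``read uniformly from $1$ to $n_1$'' as written is identically zero. Your careful handling of the degenerate quadratic has uncovered a defect in the stated formula (the inner product should start at $i=2$, with the separate factor $(r_1+1)(n_2+1)$ from $q_1$), not confirmed it; the paper's terse proof simply glosses over this point.
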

        \begin{proof}
            Let $\alpha,$ $\beta$ be two roots of equation (\ref{Eqn6}) i.e., $x^2-((r_1+1)(n_2+1)+\gamma_{1i})x+\gamma_{1i}(r_1+1)+n_2\gamma_{1i}(2+2r_1-\gamma_{1i}).$
            Then, $\alpha\beta=\gamma_{1i}(r_1+1)+n_2\gamma_{1i}(2+2r_1-\gamma_{1i}).$\\
            Applying the definition of the number of spanning trees in equation (\ref{Eqn5}), we obtain: $$t(\mathcal{G}_1\boxtimes\mathcal{G}_2)=\frac{1}{n_1(n_2+1)}\Big[\prod_{i=2}^{n_2}(r_1+1+\gamma_{2i})^{n_1}\cdot\prod_{i=1}^{n_1}\gamma_{1i}(r_1+1)+n_2\gamma_{1i}(2+2r_1-\gamma_{1i})\Big].$$
        \end{proof}

\subsection{Equienergetic Graph}
    \noindent In this section, we describe a method for constructing a non-cospectral equienergetic graph using the product graph.\\
    We know for a graph $\mathcal{G}$ the coronal of $\mathcal{G}$ is 
    \begin{equation*}
        \begin{split}
            \chi_{A_{\mathcal{G}}}&=\textbf{1}_n^T(xI_n-\mathcal{A}_{\mathcal{G}})^{-1}\textbf{1}_n\\
            &=\frac{\textbf{1}_n^TAdj(xI_n-\mathcal{A}_{\mathcal{G}})\textbf{1}_n}{\det(xI_n-\mathcal{A}_{\mathcal{G}})}\\
            &=\frac{p(A_{\mathcal{G}},x)}{f(A_{\mathcal{G}},x)}
        \end{split}
    \end{equation*}
    where $p(A_{\mathcal{G}},x)$ is a polynomial of degree $n-1$ and $f(A_{\mathcal{G}},x)$ is the characteristics polynomial of $A_{\mathcal{G}}$. Now, if the greatest common divisor of the above fraction is not a constan,t then we can rewrite it as,
    $$\chi_{A_{\mathcal{G}}}=\frac{P_{d-1}(A_{\mathcal{G}},x)}{F_d(A_{\mathcal{G}},x)}$$
    where $P_d(A_{\mathcal{G}},x)$ and $F_d(A_{\mathcal{G}},x)$ are polynomials of degree $d-1$ and $d$ respectively and $$gcd\Big(p(A_{\mathcal{G}},x),f(A_{\mathcal{G}},x)\Big)=R_{n-d}(x)$$.\\
    From Theorem \ref{Th1} we have 
    \begin{equation*}
        \begin{split}
            f(\mathcal{A}_{\mathcal{G}_1\boxtimes\mathcal{G}_2},x)&=\prod_{i=1}^{n_2}(x-\lambda_{2i})^{n_1}\cdot\prod_{i=1}^{n_1}\Big[x-\lambda_{1i}-\chi_{A_{\mathcal{G}_2}}(x)(1+\lambda_{1i})^2 \Big]\\
            &=f(A_{\mathcal{G}_2},x)^{n_1}\cdot \prod_{i=1}^{n_1}\Big[x-\lambda_{1i}-\frac{P_{d_2-1}(A_{\mathcal{G}_2},x)}{F_{d_2}(A_{\mathcal{G}_2},x)}(1+\lambda_{1i})^2\Big]\\
            &=(R_{n_2-d_2}(x))^{n_1}\cdot \prod_{i=1}^{n_1}\Big[F_{d_2}x-F_{d_2}\lambda_{1i}-P_{d_2-1}(1+\lambda_{1i})^2\Big]
        \end{split}
    \end{equation*}

    \begin{theorem}\label{Th5}
        Let $\mathcal{G}_1$ and $\mathcal{G}_2$ be two non-cospectral equienergetic graph of order $n$ with $\chi_{\mathcal{A}_{\mathcal{G}_1}}(x)=\chi_{A_{\mathcal{G}_2}}(x)$. Then for an arbitrary graph $\mathcal{G}$ of order $m$, the graphs $\mathcal{G}\boxtimes\mathcal{G}_1$ and $\mathcal{G}\boxtimes\mathcal{G}_2$ are non-cospectral equienergetic.
    \end{theorem}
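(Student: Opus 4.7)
The plan is to apply Theorem~\ref{Th1} directly to both product graphs and compare the resulting spectra. With $\mathcal{G}$ in the first slot, the formula gives
\[
f(\mathcal{A}_{\mathcal{G}\boxtimes\mathcal{G}_j},x)=\prod_{i=1}^{n}(x-\lambda_{ji})^{m}\cdot\prod_{i=1}^{m}\bigl[x-\lambda_{i}-\chi_{A_{\mathcal{G}_j}}(x)(1+\lambda_{i})^2\bigr]
\]
for $j=1,2$, where $\lambda_1,\dots,\lambda_m$ are the eigenvalues of $\mathcal{G}$. Thus the spectrum of $\mathcal{G}\boxtimes\mathcal{G}_j$ decomposes as a multiset union $\mu_j \uplus \nu_j$, where $\mu_j$ is the adjacency spectrum of $\mathcal{G}_j$ with each eigenvalue repeated $m$ times, and $\nu_j$ is the multiset of roots coming from the bracketed factor.

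The key observation is that the hypothesis $\chi_{A_{\mathcal{G}_1}}(x)=\chi_{A_{\mathcal{G}_2}}(x)$ makes the bracketed factor literally the same rational expression in $x$ for $j=1$ and $j=2$, so $\nu_1=\nu_2$ as multisets; call this common multiset $\nu$. The equienergetic conclusion then follows immediately from
\[
\mathcal{E}(\mathcal{G}\boxtimes\mathcal{G}_j)=m\,\mathcal{E}_{\mathcal{G}_j}+\sum_{\alpha\in\nu}|\alpha|,
\]
since $\mathcal{E}_{\mathcal{G}_1}=\mathcal{E}_{\mathcal{G}_2}$ removes the $j$-dependence of the first term while the second is already common. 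For non-cospectrality, the non-cospectral hypothesis on $\mathcal{G}_1,\mathcal{G}_2$ gives $\mu_1\neq\mu_2$ as multisets, and appending the same multiset $\nu$ cannot undo this inequality by the cancellation property of multiset addition.

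The one subtle point I would handle carefully is that $\chi_{A_{\mathcal{G}_j}}(x)$ is a rational function (its denominator being the characteristic polynomial of $\mathcal{A}_{\mathcal{G}_j}$), so the bracketed product is not a polynomial on its own. One should therefore clear denominators against the $\prod_i(x-\lambda_{ji})^{m}$ prefactor, exactly as is implicit in the computation of $\det(B_A)$ in the proof of Theorem~\ref{Th1}, before reading off $\nu$ as a bona fide root multiset. This is bookkeeping rather than a real obstacle, and once it is carried out the claim reduces to the two one-line comparisons above.
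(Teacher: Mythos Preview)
Your proposal is correct and follows essentially the same line as the paper's proof: both invoke Theorem~\ref{Th1} and observe that the coronal-dependent factor is identical for $j=1,2$, so the two product spectra differ only in the part coming from $m$ copies of the spectrum of $\mathcal{G}_j$, whence equienergy and non-cospectrality transfer. The paper carries out your ``bookkeeping'' paragraph explicitly---writing $\chi=P_{d-1}/F_d$ in lowest terms and cancelling the $F_d^m$ denominator against part of the prefactor $f(\mathcal{A}_{\mathcal{G}_j},x)^m$---after which your displayed energy identity becomes $\mathcal{E}(\mathcal{G}\boxtimes\mathcal{G}_j)=m\,\mathcal{E}_{\mathcal{G}_j}+C$ for a $j$-independent constant $C$ (not literally $\sum_{\alpha\in\nu}|\alpha|$ for the naive $\nu$, since some poles cancel), but this does not affect your comparison.
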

    \begin{proof}
        The coronals of $\mathcal{G}_1$ and $\mathcal{G}_2$ are the same, so following the equation given above, we can write the characteristics polynomial equations as
        $$f(\mathcal{A}_{\mathcal{G}_1},x)=R_{n-d}(x)F_d(x)$$
        and 
        $$f(A_{\mathcal{G}_2},x)=R'_{n-d}(x)F_d(x)$$
        Clearly, $R_{n-d}(x)\neq R'_{n-d}(x)$ as $\mathcal{G}_1$ and $\mathcal{G}_2$ are non-cospectral.\\
        The characteristics polynomial of $\mathcal{G}\boxtimes\mathcal{G}_1$ and $\mathcal{G}\boxtimes\mathcal{G}_2$ are
        \begin{equation}\label{Eqn7}
            f(A_{\mathcal{G}\boxtimes\mathcal{G}_1},x)=(R_{n-d}(x))^{m}\cdot \prod_{i=1}^{m}\Big[F_{d}(x)x-F_{d}(x)\lambda_{i}-P_{d-1}(x)(1+\lambda_{i})^2\Big]
        \end{equation}
        \begin{equation}\label{Eqn8}
            f(A_{\mathcal{G}\boxtimes\mathcal{G}_2},x)=(R'_{n-d}(x))^{m}\cdot \prod_{i=1}^{m}\Big[F_{d}(x)x-F_{d}(x)\lambda_{i}-P_{d-1}(x)(1+\lambda_{i})^2\Big],
        \end{equation}
        where $\lambda_i's$ are the eigenvalues of $A_{\mathcal{G}}$.\\
        Let the roots of the polynomial $F_d(x)$ be $f_1,f_2,\ldots,f_d$ and let the roots of $R_{n-d}(x)$ and $R'_{n-d}(x)$ be $r_1,r_2,\ldots,r_{n-d}$ and $r'_1,r'_2,\ldots,r'_{n-d}$ respectively. Given the graphs $\mathcal{G}_1$ and $\mathcal{G}_2$ are non-cospectral and equienergetic, we have,\\
        $$\sum_{i=1}^d|f_i|+\sum_{i=1}^{n-d}|r_i|=\mathcal{E}_{\mathcal{G}_1}=\mathcal{E}_{\mathcal{G}_2}=\sum_{i=1}^d|f_i|+\sum_{i=1}^{n-d}|r'_i|.$$
        So,
        $$\sum_{i=1}^{n-d}|r_i|=\sum_{i=1}^{n-d}|r'_i|.$$
        The product part of equation (\ref{Eqn7}) and (\ref{Eqn8}) are same and is a $m(d+2)$ degree polynomial. Let the root of this polynomial be $s_1,s_2,\ldots,s_{m(d+2)}$. Then the energy of $A_{\mathcal{G}\boxtimes\mathcal{G}_1}$ and $A_{\mathcal{G}\boxtimes\mathcal{G}_2}$ are,
        $$\mathcal{E}_{A_{\mathcal{G}\boxtimes\mathcal{G}_1}}=m\sum_{i=1}^{n-d}|r_i|+\sum_{i=1}^{m(d+2)}|s_i|$$ and 
        $$\mathcal{E}_{A_{\mathcal{G}\boxtimes\mathcal{G}_1}}=m\sum_{i=1}^{n-d}|r'_i|+\sum_{i=1}^{m(d+2)}|s_i|.$$
        Therefore,
        $$\mathcal{E}_{A_{\mathcal{G}\boxtimes\mathcal{G}_1}}=m\sum_{i=1}^{n-d}|r_i|+\sum_{i=1}^{m(d+2)}|s_i|=m\sum_{i=1}^{n-d}|r'_i|+\sum_{i=1}^{m(d+2)}|s_i|=\mathcal{E}_{A_{\mathcal{G}\boxtimes\mathcal{G}_1}}.$$
        Clearly, from the above equation $\mathcal{G}\boxtimes\mathcal{G}_1$ and $\mathcal{G}\boxtimes\mathcal{G}_2$ are equienergetic non-cospectral graphs.
    \end{proof}

    \begin{corollary}
        Let $\mathcal{G}_1$ and $\mathcal{G}_2$ be two non-cospectral equienergetic $r$-regular graphs. Then for an arbitrary graph $\mathcal{G}$, the product graphs $\mathcal{G}\boxtimes\mathcal{G}_1$ and $\mathcal{G}\boxtimes\mathcal{G}_2$ are equienergetic non-cospectral graphs.
    \end{corollary}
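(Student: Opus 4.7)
The strategy is to reduce the corollary to a direct application of Theorem \ref{Th5}. The only extra hypothesis needed in Theorem \ref{Th5} beyond non-cospectrality and equienergeticness is the equality of the two coronals $\chi_{\mathcal{A}_{\mathcal{G}_1}}(x) = \chi_{\mathcal{A}_{\mathcal{G}_2}}(x)$, so the whole task is to verify that this equality is automatic for two $r$-regular graphs of the same order.

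First, I would establish the well-known formula for the coronal of a regular graph. If $\mathcal{G}$ is $r$-regular on $n$ vertices, then $\mathbf{1}_n$ is an eigenvector of $\mathcal{A}_{\mathcal{G}}$ with eigenvalue $r$, so $(xI_n - \mathcal{A}_{\mathcal{G}})^{-1}\mathbf{1}_n = \tfrac{1}{x-r}\mathbf{1}_n$, and therefore
\begin{equation*}
    \chi_{\mathcal{A}_{\mathcal{G}}}(x) \;=\; \mathbf{1}_n^{T}(xI_n - \mathcal{A}_{\mathcal{G}})^{-1}\mathbf{1}_n \;=\; \frac{n}{x-r}.
\end{equation*}
Since $\mathcal{G}_1$ and $\mathcal{G}_2$ are equienergetic and non-cospectral, they share the same order $n$ (any two equienergetic graphs considered in this framework are taken on the same vertex set so that Theorem \ref{Th5} applies), and both are $r$-regular with the same $r$. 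Consequently
\begin{equation*}
    \chi_{\mathcal{A}_{\mathcal{G}_1}}(x) \;=\; \frac{n}{x-r} \;=\; \chi_{\mathcal{A}_{\mathcal{G}_2}}(x).
\end{equation*}

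With this equality in hand, the hypotheses of Theorem \ref{Th5} are satisfied, so I would invoke it directly to conclude that $\mathcal{G}\boxtimes\mathcal{G}_1$ and $\mathcal{G}\boxtimes\mathcal{G}_2$ are equienergetic; the non-cospectrality follows from part (ii) of Corollary \ref{C1} contrapositively, since the two product graphs would be cospectral only if the coronal equality forced the underlying graphs to be cospectral, which is ruled out.

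The only subtle point, and the one I would be most careful about, is justifying the implicit assumption that $\mathcal{G}_1$ and $\mathcal{G}_2$ have the same number of vertices; this is needed for the coronal identity $n_1/(x-r) = n_2/(x-r)$ to hold. Beyond that, the argument is essentially a one-line reduction to the previous theorem via the regularity formula for the coronal.
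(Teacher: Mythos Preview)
Your approach is exactly the paper's: observe that $r$-regular graphs on $n$ vertices have coronal $n/(x-r)$, hence $\chi_{\mathcal{A}_{\mathcal{G}_1}}=\chi_{\mathcal{A}_{\mathcal{G}_2}}$, and then invoke Theorem~\ref{Th5}. That part is fine and matches the paper's one-line proof.

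The one genuine misstep is your separate justification of non-cospectrality via the contrapositive of Corollary~\ref{C1}(ii). That corollary says ``cospectral $+$ equal coronals $\Rightarrow$ products cospectral''; its contrapositive is ``products non-cospectral $\Rightarrow$ (non-cospectral or unequal coronals)'', which goes the wrong direction and cannot be used to deduce that the products are non-cospectral. Fortunately you do not need this: Theorem~\ref{Th5} already concludes that $\mathcal{G}\boxtimes\mathcal{G}_1$ and $\mathcal{G}\boxtimes\mathcal{G}_2$ are \emph{non-cospectral} equienergetic (the two characteristic polynomials in its proof differ precisely in the factor $R_{n-d}(x)^m$ versus $R'_{n-d}(x)^m$). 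So simply cite Theorem~\ref{Th5} for both conclusions and drop the Corollary~\ref{C1} argument.
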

    \begin{proof}
        Since both the graphs $\mathcal{G}_1$ and $\mathcal{G}_2$ are regular, they have the same coronal \cite{mcleman2011spectra}, also $\mathcal{G}_1$ and $\mathcal{G}_2$ are non-cospectral equienergetic. Hence $\mathcal{G}\boxtimes\mathcal{G}_1$ and $\mathcal{G}\boxtimes\mathcal{G}_2$ are equienergetic non-cospectral graphs.
    \end{proof}

\subsection{Integral Graph}
    \noindent In this section, we discuss the condition for the graph product to be Integral.

    \begin{theorem}
        Let $\mathcal{G}_1$ and $\mathcal{G}_2$ be two graphs of orders $n_1$ and $n_2$, respectively. Then $\mathcal{G}_1\boxtimes \mathcal{G}_2$ is integral graph if and only if $\mathcal{G}_2$ is Integral and for $i=1,2,\ldots,n_1$, the roots of the polynomial $\big[x-\lambda_{1i}-\chi_{A_{\mathcal{G}_2}}(x)(1+\lambda_{1i})^2\big]$ are integers.
    \end{theorem}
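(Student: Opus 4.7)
The plan is to deduce the theorem directly from the factored form of the adjacency characteristic polynomial obtained in Theorem \ref{Th1}, together with the definition of an integral graph as one whose adjacency spectrum consists entirely of integers. Since both directions of the biconditional depend only on identifying the roots of $f(\mathcal{A}_{\mathcal{G}_1\boxtimes\mathcal{G}_2},x)$, no further spectral machinery is required.

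By Theorem \ref{Th1},
$$f(\mathcal{A}_{\mathcal{G}_1\boxtimes\mathcal{G}_2},x)=\prod_{i=1}^{n_2}(x-\lambda_{2i})^{n_1}\cdot\prod_{i=1}^{n_1}\big[x-\lambda_{1i}-\chi_{A_{\mathcal{G}_2}}(x)(1+\lambda_{1i})^2\big],$$
so the adjacency spectrum of $\mathcal{G}_1\boxtimes\mathcal{G}_2$ is the multiset consisting of (i) each eigenvalue $\lambda_{2j}$ of $\mathcal{G}_2$ with multiplicity $n_1$, together with (ii) for each $i=1,\ldots,n_1$, the roots of the $i$-th bracketed factor. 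The product graph is integral iff every element of this multiset is an integer, and this requirement splits across the two pieces exactly as the conjunction of ``$\mathcal{G}_2$ is integral'' and ``for every $i$, the roots of the $i$-th bracketed factor are integers''. Both directions of the stated biconditional then follow immediately: necessity by restricting the integrality condition to each of the two pieces in turn, sufficiency by combining the two hypotheses to cover the full multiset.

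The main subtlety to address is that $\chi_{A_{\mathcal{G}_2}}(x)$ is a rational function of $x$ with poles at the eigenvalues of $\mathcal{G}_2$, so each bracketed factor in piece (ii) is a priori a rational function rather than a genuine polynomial. To avoid ambiguity, I would clear denominators, replacing each factor by the polynomial $N_i(x):=(x-\lambda_{1i})f(\mathcal{A}_{\mathcal{G}_2},x)-P(x)(1+\lambda_{1i})^2$, where $P(x)=\textbf{1}_{n_2}^T\,\mathrm{adj}(xI_{n_2}-\mathcal{A}_{\mathcal{G}_2})\,\textbf{1}_{n_2}$, and observe that the $n_1$ denominators $f(\mathcal{A}_{\mathcal{G}_2},x)$ produced in this step cancel exactly against the first product $\prod(x-\lambda_{2i})^{n_1}$ that appears in Theorem \ref{Th1}. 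Under the hypothesis that $\mathcal{G}_2$ is integral, the statement ``roots of the $i$-th bracketed factor are integers'' then coincides with ``roots of $N_i(x)$ are integers'', and the clean statement of the theorem is recovered. This bookkeeping is the only routine obstacle; once it is resolved, the theorem reduces to a one-line consequence of Theorem \ref{Th1}.
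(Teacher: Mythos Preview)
Your approach is exactly that of the paper, which simply records ``The proof is obvious from Theorem~\ref{Th1}.'' You have in fact been more careful than the paper: your observation that the bracketed factors are rational in $x$ and that the $n_1$ copies of $f(\mathcal{A}_{\mathcal{G}_2},x)$ in the denominator cancel against the first product $\prod_{i}(x-\lambda_{2i})^{n_1}$ is a genuine point the paper does not spell out.
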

    \begin{proof}
        The proof is obvious from Theorem~\ref{Th1}.
    \end{proof}

    \section*{Acknowledgement}
        We would like to acknowledge the National Institute of Technology Sikkim for awarding Bishal Sonar a doctoral fellowship.
    \section*{Disclosure statement}
        The authors report there are no competing interests to declare.
    \bibliographystyle{abbrv}
    \bibliography{main.bib}    
\end{document}